\documentclass{article}
\usepackage[utf8]{inputenc}
\usepackage{amsmath, amsthm, amssymb, amsfonts, authblk, color, makecell}
\usepackage[T1]{fontenc}

\newtheorem{proposition}{Proposition}
\newtheorem{definition}{Definition}
\newtheorem{lemma}{Lemma}
\newtheorem{theorem}{Theorem}
\newtheorem{corollary}{Corollary}
\newtheorem{remark}{Remark}
\newtheorem{example}{Example}

\newcommand{\Ss}{\mathcal{S}}
\newcommand{\Rr}{\mathcal{R}}
\newcommand{\Tt}{\mathcal{T}}
\newcommand{\Ww}{\mathcal{W}}
\newcommand{\Dd}{\mathcal{D}}

\begin{document}

\title{On the Algebraic Properties of \(r\)-circulant Matrices Associated with Generalized \(k\)-Pell-Tribonacci Numbers}
\author{Marko Pe\v sovi\' c and Sonja Telebakovi\' c Oni\' c}
\affil{Faculty of Civil Engineering, University of Belgrade,\\ Bulevar kralja Aleksandra 73,\\ Belgrade, Serbia

\vspace{1ex}

Faculty of Mathematics, University of Belgrade,\\Studentski trg 16,\\ Belgrade, Serbia

\vspace{1ex}

\texttt{mpesovic@grf.bg.ac.rs},
\texttt{sonja.onic@matf.bg.ac.rs }}

\date{}
\maketitle

\vspace{-3ex}

\begin{abstract}
This study examines the properties of an $r$-circulant matrix whose entries are defined by the generalized $k$-Pell-Tribonacci sequence $\{P_{k,n}\}$.
Explicit expressions are derived for the Frobenius (Euclidean) norm and the entrywise $\ell_1$-norm, together with closed-form formulas for the eigenvalues and the determinant of the matrix. Furthermore, upper and lower bounds for the spectral norm are established, yielding results that generalize previously reported ones corresponding to particular sequences while also providing sharper bounds for the considered norms.

\vspace{0.5cm}
\noindent \textbf{Keywords:} 
$r$-circulant matrix; generalized $k$-Pell-Tribonacci sequence; Frobenius norm; entrywise $\ell_1$-norm; spectral norm; matrix determinant; eigenvalues.
\end{abstract}

\section{Introduction}

Circulant matrices and their generalizations, particularly $r$-circulant matrices, have been extensively studied in the literature due to their rich algebraic structure and numerous applications in signal processing, coding theory, image processing, and numerical linear algebra \cite{Kome, Shen}. An $r$-circulant matrix is a natural generalization of the classical circulant matrix, see \cite{Davis,Kra}, where each row is obtained by shifting the previous row to the right by one position and multiplying the shifted element by a fixed parameter $r$. 

\medskip

Circulant matrices and their generalizations are closely related to the discrete Fourier transform (DFT). In particular, they are diagonalizable by the Fourier matrix, which leads to efficient spectral decomposition and enables fast computation in applications such as convolution and frequency-domain analysis. This highlights the importance of determining their eigenvalues, since many structural properties of $r$-circulant matrices are governed by their spectrum. Moreover, invertibility follows naturally as a consequence of this characterization, as an $r$-circulant matrix is invertible if and only if all of its eigenvalues are nonzero.

\medskip

The spectral norm and the Frobenius norm of matrices play a fundamental role in matrix analysis. The spectral norm of a matrix $A$, denoted by $\|A\|_2$, is defined as the largest singular value of $A$, while the Frobenius norm $\|A\|_F$ is the square root of the sum of the squares of all its entries. In addition, the entrywise $\ell_1$-norm, denoted by $\|A\|_{1}$, is defined as the sum of the absolute values of all entries of $A$. Establishing sharp upper and lower bounds (or even explicit formulas) for these norms, especially for structured matrices such as $r$-circulant matrices whose entries are generated by special integer sequences, has attracted considerable attention in recent years.

\medskip

In the last two decades, many researchers have investigated the norms of circulant-type matrices involving well-known number sequences and orthogonal polynomials. For instance, the spectral and Frobenius norms of circulant and $r$-circulant matrices with recurrence sequences of the second-order: Fibonacci numbers, Lucas numbers, Pell numbers, Jacobsthal numbers, and their various generalizations and combinations, including Chebyshev polynomials, have been extensively studied (see, e.g., \cite{Dagli, Kome, Merikoski, Oteles, Puc2, Radicic1, Radicic2, Radicic3,  Shen, Yazlik} and the references therein).

\medskip

Linear recurrence sequences of the third order, such as the Narayana sequence, Tribonacci sequence, third-order Pell sequence, and their various generalizations, have become a focal point of research due to their intricate algebraic structures and applications in various mathematical domains \cite{Puc, Soykan4, Soykan5}. The theoretical foundation for such sequences was significantly advanced by Shannon and Horadam \cite{Shannon}, who investigated the fundamental properties of third-order recurrence relations.

\medskip

In this paper, we focus on a sequence that combines features of both Pell and Tribonacci generalizations, which we refer to as the \emph{generalized $k$-Pell-Tribonacci sequence}. While generalized Tribonacci sequences with arbitrary coefficients \cite{Soykan1} and higher-order $k$-Pell numbers \cite{Oteles, Taher} have been studied, as well as special cases of third-order Pell numbers \cite{Soykan4}, the particular parametric family considered here is new.

\medskip

Building on this framework, we first present closed-form summation formulas for these numbers, including weighted sums, sums of squares, and weighted sums of squares, thereby enriching the analytical theory of $k$-generalized third-order sequences.

\medskip

The main aim of the paper is to investigate the Frobenius, entrywise $\ell_1$, and spectral norms of $r$-circulant matrices whose entries are drawn from the generalized $k$-Pell-Tribonacci sequence. Explicit formulas and sharp bounds for these norms are derived, and related structural properties are discussed. Furthermore, the obtained results are compared with existing formulas for third-order Pell numbers, demonstrating that the derived estimates provide significantly improved bounds. Additionally, sufficient conditions for the invertibility of an $r$-circulant matrix, for $r \in \mathbb{R}\setminus{0}$, are established.

\section{The generalized $k$-Pell-Tribonacci sequence}

\begin{definition}
For $k \in \mathbb{N}$, the generalized $k$-Pell-Tribonacci sequence $\{P_{k,n}\}_{n \in \mathbb{N}_0}$ is defined by the third-order linear recurrence relation:
\begin{equation}\label{eq:recurrence}
 P_{k,n} := 2k P_{k,n-1} + k P_{k,n-2} + P_{k,n-3}, \quad n \geq 3
\end{equation}
subject to the initial conditions $P_{k,0} = 0, P_{k,1} = 1,$  and $ P_{k,2} = 2k$.
\end{definition}

The behavior of the sequence is determined by the roots of the associated characteristic polynomial:
\begin{equation}\label{kar_jed}
\phi(x):=x^3 - 2kx^2 - kx - 1.
\end{equation}
To apply Cardano's method, we reduce the equation $\phi(x)=0$ to the depressed cubic form $y^3 + py + q = 0$.
Substituting $x = y + \frac{2k}{3}$ into the characteristic equation we get:
\begin{equation*}
\left(y + \frac{2k}{3}\right)^3 - 2k\left(y + \frac{2k}{3}\right)^2 - k\left(y + \frac{2k}{3}\right) - 1 = 0.
\end{equation*}
Expanding and simplifying, we identify the parameters $p$ and $q$:
\begin{equation*}
p = -\frac{4k^2 + 3k}{3},
\qquad
q = -\frac{16k^3 + 18k^2 + 27}{27},
\end{equation*}
so the discriminant of the associated cubic equation is given by:
\begin{equation*}
\Delta = \left(\frac{q}{2}\right)^2 + \left(\frac{p}{3}\right)^3=\dfrac{-4k^4+28k^3+36k^2+27}{108}.
\end{equation*}
Following Cardano's method, we define two intermediate variables:
\begin{equation*}
u = \sqrt[3]{-\frac{q}{2} + \sqrt{\Delta}}, \quad v = \sqrt[3]{-\frac{q}{2} - \sqrt{\Delta}}.
\end{equation*}
Since $-4k^4+28k^3+36k^2+27 \neq 0$ for all $k\in\mathbb{N}$, we have $\Delta \neq 0$ and $u \neq v$, and consequently the cubic equation $y^3 + py + q = 0$ has three distinct roots:
\begin{equation*}
y_1=u+v,\qquad y_2=\omega_3 u + \omega^2_3 v, \qquad y_3 = \omega^2_3 u + \omega_3 v,
\end{equation*}
where $\omega_3 = e^{i\frac{2\pi}{3}} = -\frac{1}{2} + i\frac{\sqrt{3}}{2}$.
Finally, substituting back for $x$, the roots of the $k$-Pell-Tribonacci characteristic polynomial, denoted by $x_1, x_2, x_3$, are distinct and given by:
\begin{equation*}
x_1 = \frac{2k}{3} + (u + v), \qquad
x_{2,3} = \frac{2k}{3} - \frac{u+v}{2} \pm i\frac{\sqrt{3}(u-v)}{2}.
\end{equation*}
In the sequel, we denote the roots $x_1, x_2, x_3$ by $\alpha, \beta, \gamma$, respectively.

\begin{proposition}\label{n_th}
The $(n+1)$-th term of the sequence $\{P_{k,n}\}_{n\in\mathbb{N}_0}$ is given by the Binet-style formula:
\begin{equation*}
 P_{k,n} = \frac{\alpha^{n+1}}{(\alpha-\beta)(\alpha-\gamma)} + \frac{\beta^{n+1}}{(\beta-\alpha)(\beta-\gamma)} + \frac{\gamma^{n+1}}{(\gamma-\alpha)(\gamma-\beta)}
\end{equation*}
where $\alpha, \beta, \gamma$ are the distinct roots of the characteristic polynomial $\phi(x)$.
\end{proposition}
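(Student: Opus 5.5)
Let $Q_n$ denote the right-hand side of the claimed identity, regarded as a sequence indexed by $n\in\mathbb{N}_0$. We will show $P_{k,n}=Q_n$ by induction. The plan has two parts: first show that $\{Q_n\}$ satisfies the recurrence \eqref{eq:recurrence}; then show that it agrees with $\{P_{k,n}\}$ at $n=0,1,2$. Since a third-order linear recurrence together with three consecutive initial values determines the sequence uniquely, this gives $P_{k,n}=Q_n$ for all $n$. All denominators are nonzero because $\alpha,\beta,\gamma$ are distinct, as established above from $\Delta\neq0$.

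\textbf{Step 1 (recurrence).} Each root $\rho\in\{\alpha,\beta,\gamma\}$ satisfies $\phi(\rho)=0$, that is, $\rho^3=2k\rho^2+k\rho+1$. Multiplying by $\rho^{n-2}$ gives $\rho^{n+1}=2k\rho^{n}+k\rho^{n-1}+\rho^{n-2}$ for $n\ge3$. Each of the three terms of $Q_n$ is a constant (independent of $n$) times $\rho^{n+1}$. Summing these identities with the weights $1/\prod_{\sigma\neq\rho}(\rho-\sigma)$ therefore yields
\[
Q_n=2kQ_{n-1}+kQ_{n-2}+Q_{n-3}.
\]

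\textbf{Step 2 (initial values).} This is the only real computation, and the key tool is the standard Lagrange interpolation identity for three distinct nodes:
\[
\sum_{\rho}\frac{\rho^{m}}{\prod_{\sigma\neq\rho}(\rho-\sigma)}=
\begin{cases}0,& m=0,1,\\ 1,& m=2,\\ \alpha+\beta+\gamma,& m=3.\end{cases}
\]
The sum in question is the divided difference of $x^m$ at the nodes $\alpha,\beta,\gamma$, which equals the leading coefficient of the degree-$\le2$ interpolant of $x^m$. That gives $0$ for $m\le1$ and $1$ for $m=2$. For $m=3$, one reduces $x^3$ modulo $(x-\alpha)(x-\beta)(x-\gamma)$; the remainder has leading coefficient $e_1=\alpha+\beta+\gamma$.

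Applying this with $m=n+1$:
\begin{itemize}
\item $n=0$ gives $Q_0=0$;
\item $n=1$ gives $Q_1=1$;
\item $n=2$ gives $Q_2=\alpha+\beta+\gamma$. By Vieta's formulas applied to $\phi(x)=x^3-2kx^2-kx-1$, this equals $2k$.
\end{itemize}
These match $P_{k,0}=0$, $P_{k,1}=1$, $P_{k,2}=2k$.

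The expected obstacle is only justifying the $m=0,1$ cases cleanly. If preferred, they can be checked directly instead. For $m=0$, put everything over the common denominator $(\alpha-\beta)(\beta-\gamma)(\gamma-\alpha)$; the numerator is $-(\beta-\gamma)-(\gamma-\alpha)-(\alpha-\beta)=0$. The case $m=1$ is handled the same way. Either route completes the induction.
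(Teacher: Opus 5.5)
Your proof is correct, but it runs in the opposite direction from the paper's. The paper starts from the general solution $P_{k,n}=A\alpha^n+B\beta^n+C\gamma^n$ and imposes the three initial conditions. It then solves the resulting Vandermonde system by Cramer's rule, using $\alpha+\beta+\gamma=2k$ to simplify the numerator to $A=\alpha/((\alpha-\beta)(\alpha-\gamma))$. It obtains $B$ and $C$ by symmetry.

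You instead take the formula as given and observe that each $\rho^{n+1}$ satisfies the recurrence. You then verify the three initial values through the divided-difference identity $\sum_\rho \rho^m/\prod_{\sigma\neq\rho}(\rho-\sigma)=h_{m-2}(\alpha,\beta,\gamma)$, with the value $0$ for $m=0,1$. Vieta enters at exactly the same single point as in the paper.

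The paper's route is constructive: it finds the coefficients without knowing them in advance. However, it rests on the quoted (not proved) fact that every solution has the form $A\alpha^n+B\beta^n+C\gamma^n$, which itself amounts to your Step 1 plus invertibility of the Vandermonde matrix. It also only writes out $A$ explicitly.

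Your route needs only the trivial uniqueness of a solution given three consecutive initial values. It handles all three roots symmetrically at once. As a byproduct it shows $P_{k,n}=h_{n-1}(\alpha,\beta,\gamma)$ for $n\ge 1$, the complete homogeneous symmetric polynomial of degree $n-1$ in the roots. Its only cost is that it presupposes the answer.
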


\begin{proof}
According to Vieta's formulas, the roots $\alpha, \beta, \text{ and } \gamma$ of $\phi(x)$ satisfy:
\begin{equation}\label{eq:Viet}
\alpha + \beta + \gamma = 2k.
\end{equation}
The general solution of a third-order linear homogeneous recurrence relation is given by:
\begin{equation*} \label{eq:gensol}
P_{k,n} = A\alpha^n + B\beta^n + C\gamma^n,
\end{equation*}
where $A$, $B$, and $C$ are constants determined by the initial conditions. 
By substituting $n=0,1,2$ into the above equation, we obtain the following system of linear equations:
\begin{align*}
A + B + C = 0, \qquad
A\alpha + B\beta + C\gamma = 1, \qquad
A\alpha^2 + B\beta^2 + C\gamma^2 = 2k.
\end{align*}
Applying Cramer's rule together with Vieta’s relations (\ref{eq:Viet}), we obtain:
\[
A=\dfrac{ -(\gamma - \beta)(\gamma + \beta) + 2k(\gamma - \beta)}{ (\beta - \alpha)(\gamma - \alpha)(\gamma - \beta)}
= \frac{\alpha}{(\beta - \alpha)(\gamma-\alpha)}.
\]
The coefficients $B$ and $C$ can be obtained analogously by exploiting the symmetry of the roots.
\end{proof}

The growth rate of the $k$-Pell-Tribonacci sequence $\{P_{k,n}\}_{n\in\mathbb{N}_0}$ is governed by the roots of its characteristic polynomial $\phi(x)$.

\begin{lemma}
For any $k\in\mathbb{N}$, the characteristic polynomial $\phi(x)$ has exactly one positive real root $\alpha$ such that $2k < \alpha < 2k + 1$. This root is the dominant root.
\end{lemma}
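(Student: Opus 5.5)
We will establish the three claims (uniqueness of the positive root, its location in $(2k,2k+1)$, and its dominance) separately, using only the polynomial $\phi(x)=x^3-2kx^2-kx-1$ and Vieta's relations.

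\emph{Uniqueness.} The coefficient sequence of $\phi$ is $(+,-,-,-)$, which has exactly one sign change. By Descartes' rule of signs, $\phi$ has exactly one positive real root, which we call $\alpha$.

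\emph{Location.} We evaluate $\phi$ at the endpoints. At the left endpoint,
\[
\phi(2k)=8k^3-8k^3-2k^2-1=-2k^2-1<0 .
\]
At the right endpoint,
\[
\phi(2k+1)=(2k+1)^2\bigl((2k+1)-2k\bigr)-k(2k+1)-1=(2k+1)^2-2k^2-k-1=2k^2+3k>0
\]
for $k\in\mathbb{N}$. By the intermediate value theorem there is a root in $(2k,2k+1)$. Since this root is positive, uniqueness forces it to be $\alpha$.

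\emph{Dominance.} We must show $|\beta|,|\gamma|<\alpha$ for the other two roots. From Vieta, $\alpha\beta\gamma=1$, so $|\beta\gamma|=1/\alpha<1/(2k)\le 1/2$.

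First suppose $\beta,\gamma$ are complex conjugates. Then $|\beta|=|\gamma|=\alpha^{-1/2}<1<\alpha$, and dominance is immediate.

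Now suppose $\beta,\gamma$ are real. Then both are negative: their product $1/\alpha$ is positive, and neither can be positive by uniqueness. Their sum is $\beta+\gamma=2k-\alpha\in(-1,0)$, so $|\beta|+|\gamma|<1<\alpha$. In particular $|\beta|,|\gamma|<1<\alpha$.

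In both cases $\alpha$ strictly exceeds the other roots in modulus. Combined with the Binet formula of Proposition~\ref{n_th}, this shows that $\alpha^{n+1}$ governs the growth of $P_{k,n}$, which justifies calling $\alpha$ the dominant root.

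The computations are routine. The only step that needs care is the case split for the non-positive roots, since the paper has not fixed whether $\Delta$ is positive or negative for every $k$. The argument above covers both possibilities, so this is not a real obstacle.
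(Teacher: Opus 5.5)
Your proof is correct. It shares the location step with the paper, namely the evaluations $\phi(2k)=-2k^2-1<0$ and $\phi(2k+1)=2k^2+3k>0$ followed by the intermediate value theorem. It differs in the other two steps.

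\textbf{Uniqueness and dominance in the paper.} The paper observes that $\phi'(x)=3x^2-4kx-k>0$ for $x>2k$, so $\phi$ is strictly increasing there, and asserts that this gives both uniqueness and dominance. Monotonicity on $(2k,\infty)$ only shows that $\alpha$ is the unique root in that interval. To rule out other positive roots one must still note, for example, that $\phi(x)=x^2(x-2k)-kx-1<0$ on $(0,2k]$. Monotonicity also says nothing about $|\beta|$ and $|\gamma|$.

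\textbf{What your route gives.} Descartes' rule of signs settles uniqueness on all of $(0,\infty)$ in one line. Your Vieta argument, using $\alpha\beta\gamma=1$ and $\beta+\gamma=2k-\alpha\in(-1,0)$, actually proves dominance. It does so in the sharper form $|\beta|,|\gamma|<1$, so the two non-dominant terms in the Binet formula decay to zero rather than merely growing more slowly.

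\textbf{The case split is necessary.} Your split on the nature of $\beta,\gamma$ is not just precautionary, since both cases occur. The quantity $-4k^4+28k^3+36k^2+27$ is positive for $1\le k\le 8$, giving a complex-conjugate pair. It is negative for all $k\ge 9$ (for instance $-2889$ at $k=9$), giving three distinct real roots. So an argument that assumed $\beta,\gamma$ complex would fail for $k\ge 9$, and yours covers every $k$.
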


\begin{proof} We observe the following values:
\begin{itemize}
\item $\phi(2k) = (2k)^3 - 2k(2k)^2 - k(2k) - 1 = -2k^2 - 1 < 0$,
\item $\phi(2k+1) = (2k+1)^3 - 2k(2k+1)^2 - k(2k+1) - 1=2k^2+3k>0$.
\end{itemize}
Since $\phi(x)$ is continuous and changes sign between $2k$ and $2k+1$, by the Intermediate Value Theorem, there exists at least one real root $\alpha \in (2k, 2k+1)$. By analyzing the derivative $\phi'(x) = 3x^2 - 4kx - k$, it can be shown that $\phi(x)$ is strictly increasing for $x > 2k$, ensuring the uniqueness and dominance of $\alpha$.
\end{proof}

In Soykan's paper \cite{Soykan1}, closed-form expressions for summation formulas of generalized Tribonacci numbers are established. As a special case, we obtain summation formulas for the $k$-Pell–Tribonacci numbers. The proof presented here uses a more direct approach.

\begin{proposition}
Let $\{P_{k,n}\}_{n \in \mathbb{N}_0}$ be the generalized $k$-Pell-Tribonacci sequence.
The sum of the first $n+1$ terms of the sequence is given by:
\begin{equation*}
\Ss^{(1)}_n :=\sum_{i=0}^{n} P_{k,i}= \frac{P_{k,n+3} + (1-2k)P_{k,n+2} + (1-3k) P_{k,n+1} - 1}{3k}.
\end{equation*}
\end{proposition}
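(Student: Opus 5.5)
The plan is a telescoping argument applied directly to the recurrence (\ref{eq:recurrence}). Write it as $P_{k,j+3}=2kP_{k,j+2}+kP_{k,j+1}+P_{k,j}$ for $j\ge 0$ and sum over $j=0,\dots,n$. Every sum that appears is a shifted copy of $\Ss^{(1)}_n$, so we obtain a single linear equation for $\Ss^{(1)}_n$ in which the coefficient $1-2k-k-1=-3k$ appears. Since $k\ge 1$, this coefficient is nonzero and we can solve.

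Concretely, set $S=\Ss^{(1)}_n$ and express the shifted sums in terms of $S$:
\[
\sum_{j=0}^{n}P_{k,j+3}=S+P_{k,n+1}+P_{k,n+2}+P_{k,n+3}-(P_{k,0}+P_{k,1}+P_{k,2}),
\]
\[
\sum_{j=0}^{n}P_{k,j+2}=S+P_{k,n+1}+P_{k,n+2}-(P_{k,0}+P_{k,1}),\qquad
\sum_{j=0}^{n}P_{k,j+1}=S+P_{k,n+1}-P_{k,0}.
\]
Substitute these into the summed recurrence, using the initial values $P_{k,0}=0$, $P_{k,1}=1$, $P_{k,2}=2k$. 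The constant terms then work out as follows. The left side contributes $-(1+2k)$. The term $2k\sum P_{k,j+2}$ contributes $-2k$. The term $k\sum P_{k,j+1}$ contributes $0$. Collecting the $S$-terms gives $S(1-2k-k-1)=-3kS$, so the equation rearranges to
\[
3kS=P_{k,n+3}+(1-2k)P_{k,n+2}+(1-2k-k)P_{k,n+1}-1,
\]
which is exactly the claimed formula.

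The only real obstacle is bookkeeping: tracking the boundary terms and the constants, including checking that $-(1+2k)+2k=-1$. As a sanity check I would verify the case $n=0$. Here $P_{k,3}=4k^2+k$, so the numerator is
\[
4k^2+k+(1-2k)2k+(1-3k)-1=3k,
\]
which gives $S=0=P_{k,0}$, as expected. An alternative route is induction on $n$, using the recurrence to verify the step, but the telescoping derivation is more direct and matches the paper's remark about a direct approach.
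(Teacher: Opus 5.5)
Your proposal is correct and is essentially the paper's own proof: sum the recurrence over $0\le i\le n$, rewrite each shifted sum as $\Ss^{(1)}_n$ plus boundary terms, insert $P_{k,0}=0$, $P_{k,1}=1$, $P_{k,2}=2k$, and solve the resulting linear equation, whose coefficient $3k$ is nonzero. There is one arithmetic slip in your sanity check: at $n=0$ the numerator is $4k^2+k+(2k-4k^2)+(1-3k)-1=0$, not $3k$, and it is this value that gives $\Ss^{(1)}_0=0$ (the value $3k$ is what you get at $n=1$, which gives $\Ss^{(1)}_1=1$).
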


\begin{proof}
Summing the recurrence relation $P_{k,i} = P_{k,i+3} -2k P_{k,i+2} -k P_{k,i+1}$ from $i=0$ to $n$, we get:
\begin{equation}\label{eq:sum}
\Ss^{(1)}_n=\sum_{i=0}^n P_{k,i} = \sum_{i=0}^n P_{k,i+3} - 2k \sum_{i=0}^n P_{k,i+2} - k \sum_{i=0}^n P_{k,i+1}.
\end{equation}
Expressing each summation in terms of the total sum $\Ss_n^{(1)}$, we obtain:
\begin{align*}
\sum_{i=0}^n P_{k,i+3} &= \Ss^{(1)}_n + P_{k,n+1} + P_{k,n+2} + P_{k,n+3} - (P_{k,0} + P_{k,1} + P_{k,2}),\\
\sum_{i=0}^n P_{k,i+2} &= \Ss^{(1)}_n + P_{k,n+1} + P_{k,n+2} - (P_{k,0} + P_{k,1}),\\
\sum_{i=0}^n P_{k,i+1} &= \Ss^{(1)}_n + P_{k,n+1} - P_{k,0}.
\end{align*}
Substituting the initial conditions $P_{k,0} = 0$, $P_{k,1} = 1$, and $P_{k,2} = 2k$ into the sum equation (\ref{eq:sum}) and grouping the $\Ss^{(1)}_n$ terms on the left-hand side, we have:
\begin{equation*}
3k \Ss^{(1)}_n = P_{k,n+3} + (1-2k)P_{k,n+2} + (1-3k)P_{k,n+1} - 1.
\end{equation*}
Dividing by $3k$ yields the final closed-form expression for the sum. 
\end{proof}

We aim to find the closed-form expression for the weighted sum of $k$-Pell-Tribonacci numbers, using the index shifting method.

\begin{proposition}
Let $\{P_{k,n}\}_{n \in \mathbb{N}_0}$ be the generalized $k$-Pell-Tribonacci sequence. The weighted sum $\Ww^{(1)}_n := \sum_{i=0}^{n} i P_{k,i}$ is given by:
\begin{equation*}
\Ww^{(1)}_n = \frac{(a_1n + a_2)P_{k,n+3} + (b_1 n + b_2)P_{k,n+2} + (c_1 n + c_2)P_{k,n+1} + d}{9k^2},
\end{equation*}
where $a_1=3k$, $a_2=5k-3$, $b_1=3k-6k^2$, $b_2=-10k^2+8k-3$, $c_1=3k-9k^2$, $c_2=-9k^2+8k-3$, and $d=k+3$.
\end{proposition}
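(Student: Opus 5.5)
We follow the index-shifting method of the previous proposition, with $\Ss^{(1)}_n$ now treated as known. Multiply the rewritten recurrence $P_{k,i} = P_{k,i+3} - 2kP_{k,i+2} - kP_{k,i+1}$ by $i$ and sum from $i=0$ to $n$:
\begin{equation*}
\Ww^{(1)}_n = \sum_{i=0}^n iP_{k,i+3} - 2k\sum_{i=0}^n iP_{k,i+2} - k\sum_{i=0}^n iP_{k,i+1}.
\end{equation*}
In each sum we substitute $j=i+s$ ($s=1,2,3$), so that $iP_{k,i+s} = (j-s)P_{k,j}$. Each sum then becomes a difference of weighted and plain sums:
\begin{equation*}
\sum_{i=0}^n iP_{k,i+s} = \sum_{j=s}^{n+s} jP_{k,j} - s\sum_{j=s}^{n+s} P_{k,j}.
\end{equation*}

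Next we express everything through $\Ww^{(1)}_n$ and $\Ss^{(1)}_n$. This means adding the boundary terms $(n+1)P_{k,n+1},\dots,(n+s)P_{k,n+s}$ and subtracting the initial terms $jP_{k,j}$ for $j<s$; the latter involve only $P_{k,1}=1$ and $P_{k,2}=2k$. The $\Ww^{(1)}_n$ coefficients combine to $1-2k-k$, so we can collect
\begin{equation*}
3k\,\Ww^{(1)}_n = (\text{linear in } \Ss^{(1)}_n) + (\text{terms in } P_{k,n+1},P_{k,n+2},P_{k,n+3} \text{ with coefficients linear in } n) + \text{const}.
\end{equation*}
The coefficient of $\Ss^{(1)}_n$ is $-(3-4k-k) = 5k-3$, up to sign bookkeeping and boundary corrections for $\Ss$ over shifted ranges.

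Then we substitute the closed form $\Ss^{(1)}_n = \bigl(P_{k,n+3} + (1-2k)P_{k,n+2} + (1-3k)P_{k,n+1} - 1\bigr)/(3k)$ from the previous proposition and multiply through by $3k$. This produces the common denominator $9k^2$. The coefficient of $n$ should come only from the boundary terms, which gives $a_1=3k$, $b_1=3k(1-2k)$ and $c_1=3k(1-3k)$ — a quick consistency check against the stated values. The constant parts $a_2,b_2,c_2,d$ come from combining the $\Ss$-substitution (which contributes multiples of $5k-3$ times $1$, $1-2k$, $1-3k$, $-1$) with the boundary and initial corrections.

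The main obstacle is purely the bookkeeping of these constants, which is where sign errors creep in. To control it, after deriving the formula we will verify it independently at $n=0,1,2$ using $P_{k,3}=4k^2+k$, $P_{k,4}=8k^3+6k^2+1$ and $P_{k,5}$ from the recurrence. An identity that is linear in $n$ times solutions of the recurrence, plus a constant, and that satisfies the same first-order inhomogeneous relation $\Ww^{(1)}_n-\Ww^{(1)}_{n-1}=nP_{k,n}$, is determined by such checks. Alternatively, induction on $n$ gives a clean proof: reduce $P_{k,n+4}$ via the recurrence and match coefficients of $P_{k,n+1},P_{k,n+2},P_{k,n+3}$.
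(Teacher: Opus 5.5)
Your plan is the paper's proof: multiplying the recurrence by $i$, shifting indices and collecting gives exactly the paper's intermediate identity $3k\,\Ww^{(1)}_n=(5k-3)\Ss^{(1)}_n+nP_{k,n+3}+(n(1-2k)-1)P_{k,n+2}+(n(1-3k)+2k-2)P_{k,n+1}+2$. Substituting the closed form of $\Ss^{(1)}_n$ then yields the stated $a_2,b_2,c_2,d$; for example, $d=-(5k-3)+6k=k+3$.

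Two side remarks need correcting. First, $P_{k,4}=2kP_{k,3}+kP_{k,2}+P_{k,1}=8k^3+4k^2+1$, not $8k^3+6k^2+1$; with your value the $n=1$ check would wrongly fail. Second, the spot checks at $n=0,1,2$ do not by themselves prove the identity, because expressions of this shape carry seven free coefficients. The argument must therefore rest on the derivation itself, or on showing that the right-hand side $F(n)$ satisfies $F(n)-F(n-1)=nP_{k,n}$ with $F(0)=0$, which is your induction step.
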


\begin{proof}
We apply a weighted summation to the recurrence relation $P_{k,i} = P_{k,i+3} - 2k P_{k,i+2} - k P_{k,i+1}$ by multiplying both sides by $i$ and summing over $i=0$ to $n$. This yields:
\begin{equation*}
\Ww^{(1)}_n = \sum_{i=0}^{n} i P_{k,i} = \sum_{i=1}^{n} i \left( P_{k,i+3} - 2k P_{k,i+2} - k P_{k,i+1} \right)
=W_1-2kW_2-kW_3.
\end{equation*}
We shift the indices of the summations on the right-hand side to align them with $\Ww^{(1)}_n$ and $\Ss^{(1)}_n$, where $\Ss^{(1)}_n$ is the standard sum $\sum_{i=0}^{n} P_{k,i}$.
\begin{enumerate}
\item For the first sum $W_1$, let $j=i+3$. Then $W_1= \sum_{j=4}^{n+3} (j-3) P_{k,j}.$ We now express both sums in terms of $\Ww^{(1)}_n$ and $\Ss^{(1)}_n$:
\begin{align*}
\sum_{j=4}^{n+3} j P_{k,j}
&= \Ww^{(1)}_n + (n+1)P_{k,n+1} + (n+2)P_{k,n+2} + (n+3)P_{k,n+3}\\
&- (P_{k,1} + 2P_{k,2} + 3P_{k,3}),\\
\sum_{j=4}^{n+3} P_{k,j}
&= \Ss^{(1)}_n + P_{k,n+1} + P_{k,n+2} + P_{k,n+3}
- (P_{k,1} + P_{k,2} + P_{k,3}).
\end{align*}
\item For the second sum, we have $W_2=\sum_{j=3}^{n+2} (j-2) P_{k,j}$. Hence,
\begin{align*}
W_2&=\Ww^{(1)}_n +(n+1)P_{k,n+1}+ (n+2)P_{k,n+2}-(2P_{k,2}+P_{k,1})\\
&-2(\Ss^{(1)}_n+P_{k,n+1}+ P_{k,n+2}-(P_{k,1}+P_{k,2})).
\end{align*}
\item For the third sum, we have $W_3 = \sum_{j=2}^{n+1} (j-1) P_{k,j}$. Hence,
\begin{align*}
W_3=\Ww^{(1)}_n +(n+1)P_{k,n+1}-P_{k,1}-(\Ss^{(1)}_n+P_{k,n+1}-P_{k,1}).
\end{align*}
\end{enumerate}
Substituting the obtained expressions for $W_1$, $W_2$, and $W_3$, along with the initial conditions
$P_{k,0}=0$, $P_{k,1}=1$, and $P_{k,2}=2k$, we obtain that $3k\Ww^{(1)}_n$ equals:
\begin{equation*}
(5k-3)\Ss^{(1)}_{n} + nP_{k,n+3}+(n(1-2k)-1)P_{k,n+2}+(n(1-3k)+2k-2)P_{k,n+1}+2.
\end{equation*}
Using the expression for the partial sum $\Ss^{(1)}_n$, we obtain the required formula after division by $3k$.
\end{proof}

We will now offer a short derivation of the sum of the squares of the $k$-Pell-Tribonacci numbers. Our method avoids generating
functions and instead relies on elementary recurrence identities and telescoping sums.
The same results have been obtained by Soykan in \cite{Soykan2}.

\begin{proposition}\label{prop:sumsquares}
Let $\{P_{k,n}\}_{n \in \mathbb{N}_0}$ be the generalized $k$-Pell-Tribonacci sequence. The sum of the squares of the first $n+1$ terms is given by:
\begin{equation*}
\begin{split}
\Ss^{(2)}_n:=\sum_{i=0}^nP_{k,i}^2&= \frac{a_1P_{k,n+3}^{2}+a_2P_{k,n+2}^{2}+a_3P_{k,n+1}^{2}}{-3k(k+2)}\\
&-\frac{b_1P_{k,n+1}P_{k,n+2}+b_2P_{k,n+2}P_{k,n+3}+b_3P_{k,n+1}P_{k,n+3}+c_1}{3k(k+2)},
\end{split}
\end{equation*}
where $a_1=1$, $a_2=4k^2+4k+1$, $a_3=3k^2+6k+1$, $b_1=2k-2$, $b_2=-4k-2$, $b_3=-2$, and $c_1=-1$.
\end{proposition}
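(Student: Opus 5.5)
The plan is to avoid generating functions and use a telescoping argument on a quadratic form in three consecutive terms. Define
\[
Q_i := a_1P_{k,i+2}^{2}+a_2P_{k,i+1}^{2}+a_3P_{k,i}^{2}+b_1P_{k,i}P_{k,i+1}+b_2P_{k,i+1}P_{k,i+2}+b_3P_{k,i}P_{k,i+2},
\]
with the constants $a_j,b_j$ from the statement. The claimed formula is equivalent to
\[
-3k(k+2)\,\Ss^{(2)}_n = Q_{n+1}+c_1 .
\]
It therefore suffices to prove two things: the one-step identity $Q_{i+1}-Q_i=-3k(k+2)P_{k,i+1}^2$ for all $i\ge 0$, and a base case.

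For the one-step identity, I would substitute $P_{k,i+3}=2kP_{k,i+2}+kP_{k,i+1}+P_{k,i}$ into $Q_{i+1}$. This makes both $Q_{i+1}$ and $Q_i$ quadratic forms in the three variables $x=P_{k,i}$, $y=P_{k,i+1}$, $z=P_{k,i+2}$. The difference $Q_{i+1}-Q_i$ is then a quadratic form in $x,y,z$ with six coefficients. I will check that every coefficient vanishes except that of $y^2$, and that the $y^2$ coefficient equals $-3k(k+2)$. This requires expanding $(2kz+ky+x)^2$ and the cross terms involving $P_{k,i+3}$, then matching the coefficients of $x^2,y^2,z^2,xy,yz,xz$. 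These six polynomial identities in $k$ are the heart of the proof.

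If I were deriving the constants rather than verifying them, I would treat $a_1,a_2,a_3,b_1,b_2,b_3$ as unknowns, normalize $a_1=1$, and solve the resulting linear system. Since the constants are given, verification suffices.

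Telescoping the one-step identity from $i=0$ to $n$ gives
\[
Q_{n+1}-Q_0=-3k(k+2)\sum_{i=0}^{n}P_{k,i+1}^2=-3k(k+2)\,\Ss^{(2)}_{n+1},
\]
where I use $P_{k,0}=0$. This expresses $\Ss^{(2)}_{n+1}$ in terms of $Q_{n+1}$. To match the statement's indexing, I shift: $\Ss^{(2)}_n=\Ss^{(2)}_{n+1}-P_{k,n+1}^2$, or equivalently I rewrite $Q_{n+1}$ in terms of $P_{k,n+1},P_{k,n+2},P_{k,n+3}$. I will use whichever alignment makes the target expression $Q_{n+1}$ appear exactly, consistent with the roles of $a_1$ (on $P_{k,n+3}^2$) and $a_3$ (on $P_{k,n+1}^2$).

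The constant $c_1$ comes from $Q_0$. With $P_{k,0}=0$, $P_{k,1}=1$, $P_{k,2}=2k$, direct evaluation gives
\[
Q_0=4k^2+(4k^2+4k+1)+b_2\cdot 2k = 8k^2+4k+1-8k^2-4k = 1 .
\]
This value should account for $c_1=-1$ after moving $Q_0$ to the other side. As a sanity check, I will confirm the final formula at $n=0$, where the sum is $0$, and at $n=1$, where the sum is $1$, using $P_{k,3}=4k^2+k$ and $P_{k,4}$ from the recurrence.

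The main obstacle I expect is the coefficient bookkeeping in the one-step identity, together with getting the index alignment and sign of $c_1$ exactly right. I would resolve both by checking the final formula numerically at $n=0,1,2$ for a symbolic $k$.
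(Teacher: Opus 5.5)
Your telescoping strategy is sound, and it is a different route from the paper's. The paper introduces $\Rr_n=\sum_{i=0}^n P_{k,i}P_{k,i+1}$ and $\Tt_n=\sum_{i=0}^n P_{k,i}P_{k,i+2}$, derives a $3\times3$ linear system for $\Ss^{(2)}_n,\Rr_n,\Tt_n$, and solves it by Cramer's rule. However, the one-step identity you put at the heart of your argument is false as stated.

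Set $x=P_{k,i}$, $y=P_{k,i+1}$, $z=P_{k,i+2}$ and $P_{k,i+3}=2kz+ky+x$. The coefficients of $Q_{i+1}$ are then:
\begin{itemize}
\item $x^2$: $a_1=1$;
\item $y^2$: $k^2a_1+a_3+kb_3=4k^2+4k+1=a_2$;
\item $z^2$: $4k^2a_1+a_2+2kb_2=1=a_1$;
\item $xy$: $2ka_1+b_3=b_1$;
\item $yz$: $4k^2a_1+b_1+kb_2+2kb_3=-4k-2=b_2$;
\item $xz$: $4ka_1+b_2=-2=b_3$.
\end{itemize}
These agree with the coefficients of $Q_i$ in every monomial except $x^2$, where $Q_i$ has $a_3=3k^2+6k+1$. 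Hence
\[
Q_{i+1}-Q_i=(1-a_3)P_{k,i}^2=-3k(k+2)P_{k,i}^2 .
\]
The surviving coefficient is that of $x^2$. The $y^2$ coefficient of the difference is $0$, not $-3k(k+2)$.

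This is not a harmless relabeling. Your identity already fails at $i=0$: a direct computation with $P_{k,3}=4k^2+k$ gives $Q_1=1=Q_0$, so $Q_1-Q_0=0\neq-3k(k+2)P_{k,1}^2$. The relation you then telescope to, $-3k(k+2)\Ss^{(2)}_{n+1}=Q_{n+1}-1$, is therefore false. It is also incompatible with the statement, since together they would force $P_{k,n+1}=0$.

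The proposed re-alignment $\Ss^{(2)}_n=\Ss^{(2)}_{n+1}-P_{k,n+1}^2$ cannot rescue it. It would change the coefficient of $P_{k,n+1}^2$ to $a_3+3k(k+2)$ instead of $a_3$.

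With the corrected identity no reindexing is needed. Summing over $i=0,\dots,n$ gives $Q_{n+1}-Q_0=-3k(k+2)\Ss^{(2)}_n$. Your correct value $Q_0=1$ then yields $-3k(k+2)\Ss^{(2)}_n=Q_{n+1}-1$, which is exactly the statement with $c_1=-1$.

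Once repaired, your proof is complete and lighter than the paper's: six coefficient checks and one initial value instead of a $3\times3$ determinant computation. Its limitation is that it only verifies the given constants, whereas the paper's linear system actually derives them.
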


\begin{proof}
Let $\Ss^{(2)}_n:= \sum_{i=0}^{n} P_{k,i}^2,$ $\Rr_n := \sum_{i=0}^{n} P_{k,i} P_{k,i+1}$, $\Tt_n := \sum_{i=0}^{n} P_{k,i} P_{k,i+2}$.
Squaring the recurrence relation yields:
\begin{equation*}
P_{k,i+3}^2 = 4k^2 P_{k,i+2}^2 + k^2 P_{k,i+1}^2 + P_{k,i}^2 + 4k^2 P_{k,i+2} P_{k,i+1} + 4k P_{k,i+2} P_{k,i} + 2k P_{k,i+1} P_{k,i}.
\end{equation*}
Multiplying the same recurrence relation by $P_{k,i+2}$ we get:
\begin{equation*}
P_{k,i+2} P_{k,i+3} = 2k P_{k,i+2}^2 + k P_{k,i+1} P_{k,i+2} + P_{k,i} P_{k,i+2}.
\end{equation*}
Similarly, multiplying by $P_{k,i+1}$ gives:
\begin{equation*}
P_{k,i+1} P_{k,i+3} = 2k P_{k,i+1} P_{k,i+2} + k P_{k,i+1}^2 + P_{k,i} P_{k,i+1}.
\end{equation*}
Summing these identities from $i=0$ to $n$ and rearranging the indices (e.g., $\sum_{i=0}^n P_{k,i+3}^2 =\Ss^{(2)}_n - (P_{k,0}^2 + P_{k,1}^2 + P_{k,2}^2) + (P_{k,n+1}^2 + P_{k,n+2}^2 + P_{k,n+3}^2)$) leads to a system of three linear equations, represented in matrix form as:
\begin{equation*}
\mathbf{M} \begin{bmatrix} \Ss^{(2)}_n & \Rr_n & \Tt_n \end{bmatrix}^T = \begin{bmatrix} \Delta_1 & \Delta_2 & \Delta_3 \end{bmatrix}^T,
\end{equation*}
where the coefficient matrix $\mathbf{M}$ is:
\begin{equation*}
\mathbf{M} = \begin{bmatrix}
-5k^2 & - (4k^2+2k) & - 4k \\
-2k & 1-k & -1 \\
-k &- (2k+1) & 1
\end{bmatrix},
\end{equation*}
and $\Delta_1, \Delta_2, \Delta_3$ are constants calculated from the boundary terms $P_{k,0},$ $ P_{k,1},$ $ P_{k,2}$, and $P_{k,n+1}$, $P_{k,n+2},$ $ P_{k,n+3}$.
They are defined as follows:
\begin{align*}
\Delta_1 &= -P_{k,n+3}^2 - (1-4k^2)P_{k,n+2}^2 - (1-5k^2)P_{k,n+1}^2 +4k^2P_{k,n+1}P_{k,n+2}+1, \\
\Delta_2 &= 2k  P_{k,n+2}^2 +2k P_{k,n+1}^2+(k-1)P_{k,n+1}P_{k,n+2}-P_{k,n+2}P_{k,n+3},\\
\Delta_3 &= kP_{k,n+1}^2 - P_{k,n+1}P_{k,n+3} + 2kP_{k,n+1}P_{k,n+2}.
\end{align*}
Since $\det(\mathbf{M}) = -9k^2(k+2)\neq0$ for all $k\in\mathbb{N}$, the sum of squares $\Ss^{(2)}_n$ can be obtained by solving this system using Cramer's Rule:
\begin{equation*} \label{eq:Cramer}
\Ss^{(2)}_n = \frac{\det(\mathbf{M}_{\Ss^{(2)}_n})}{\det(\mathbf{M})},
\end{equation*}
where $\mathbf{M}_{\Ss_n^{(2)}}$ is the matrix $\mathbf{M}$ with the first column replaced by the vector $[\Delta_1, \Delta_2, \Delta_3]^T$.
Calculating determinant $\det(\mathbf{M}_{\Ss^{(2)}_n})$ we have:
\begin{equation*}
\begin{split}
\det(\mathbf{M}_{\Ss^{(2)}_n}) = 3k P_{k,n+3}^{2}+3k(4k^{2}+4k+1)P_{k,n+2}^{2}+3k(3k^{2}+6k+1)P_{k,n+1}^{2}+ \\
3k(2k-2) P_{k,n+1}P_{k,n+2}+3k(-4k-2)P_{k,n+2}P_{k,n+3}-6k P_{k,n+1}P_{k,n+3}-3k,
\end{split}
\end{equation*}
so dividing both the numerator and the denominator by $-3k$ yields the desired explicit formula for $\Ss^{(2)}_n$.
\end{proof}

In the next proposition we provide a detailed derivation of the closed-form expression for the weighted sum of squares, without the use of
generating functions. For yet another method we refer the reader to \cite{Soykan3}.

\begin{proposition}\label{prop:weightedsquares}
Let $\{P_{k,n}\}_{n \in \mathbb{N}_0}$ be the generalized $k$-Pell-Tribonacci sequence. 
The weighted sum of squares $\Ww_n^{(2)} := \sum_{i=0}^{n} i P_{k,i}^2$ is given by:
\begin{equation*}
\begin{split}
\Ww^{(2)}_n&= \frac{a_1P_{k,n+3}^{2}+a_2P_{k,n+2}^{2}+a_3P_{k,n+1}^{2}}{9k^2(k+2)^2}\\
&+\frac{b_1P_{k,n+1}P_{k,n+2}+b_2P_{k,n+2}P_{k,n+3}+b_3P_{k,n+1}P_{k,n+3}+c_1}{9k^2(k+2)^2},
\end{split}
\end{equation*}
where:
\begin{align*}
a_1&=-3k(k+2)n- (7k^2 + 14k + 9),\\
a_2&=-3k(k+2)(2k+1)^2n-(28k^4 + 72k^3 + 60k^2 + 20k + 9),\\ 
a_3&=-3k(k+2)(3k^2+6k+1) n - (9k^4 + 36k^3 + 46k^2 + 20k + 9),\\
b_1&=-6k(k+2)(k-1) n - (20k^3 + 20k^2 - 16k - 6),\\  
b_2&=6k(k+2)(2k+1) n + (28k^3 + 64k^2 + 46k + 6),\\ 
b_3&=6k(k+2)n + (14k^2 + 22k + 6),\\ 
c_1&=(k^{2}+2k+9).
\end{align*}
\end{proposition}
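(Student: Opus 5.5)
We mirror the method of Proposition~\ref{prop:sumsquares}, now with the weight $i$. Introduce the weighted quantities
\[
\Ww^{(2)}_n=\sum_{i=0}^n iP_{k,i}^2,\qquad \mathcal{U}_n:=\sum_{i=0}^n iP_{k,i}P_{k,i+1},\qquad \mathcal{V}_n:=\sum_{i=0}^n iP_{k,i}P_{k,i+2}.
\]
We take the same three identities used there: the squared recurrence, the recurrence multiplied by $P_{k,i+2}$, and the recurrence multiplied by $P_{k,i+1}$. We multiply each by $i$ and sum over $i=0,\dots,n$.

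In each resulting sum we shift indices so that every term has the form $jP_{k,j}^2$, $jP_{k,j}P_{k,j+1}$ or $jP_{k,j}P_{k,j+2}$. Writing $i=j-s$ produces two kinds of terms. The weighted part gives $\Ww^{(2)}_n,\mathcal{U}_n,\mathcal{V}_n$. The $-s$ part gives the unweighted sums $\Ss^{(2)}_n,\Rr_n,\Tt_n$ from the proof of Proposition~\ref{prop:sumsquares}. There are also boundary terms at the bottom, which involve only $P_{k,0},\dots,P_{k,3}$, and at the top, which involve $P_{k,n+1},P_{k,n+2},P_{k,n+3}$ multiplied by linear factors $n+1,n+2,n+3$. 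The key structural point is that the coefficients of $\Ww^{(2)}_n,\mathcal{U}_n,\mathcal{V}_n$ are exactly those of the unweighted case. The system therefore reads
\[
\mathbf{M}\,[\Ww^{(2)}_n,\ \mathcal{U}_n,\ \mathcal{V}_n]^T=[\Delta'_1,\Delta'_2,\Delta'_3]^T,
\]
with the same matrix $\mathbf{M}$ and $\det\mathbf{M}=-9k^2(k+2)\neq0$.

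The right-hand side $\Delta'_\ell$ is a sum of three parts:
\begin{itemize}
\item $n$ times the top boundary quadratic form, which is essentially $n\Delta_\ell$;
\item a fixed linear combination of $\Ss^{(2)}_n,\Rr_n,\Tt_n$ with small integer coefficients coming from the shifts;
\item constant boundary corrections.
\end{itemize}
Before substituting, we must express $\Rr_n$ and $\Tt_n$ in closed form. The proof of Proposition~\ref{prop:sumsquares} gives only $\Ss^{(2)}_n$ explicitly, but Cramer's rule applied to the same system gives $\Rr_n$ and $\Tt_n$ as quadratic forms in $P_{k,n+1},P_{k,n+2},P_{k,n+3}$ divided by $-9k^2(k+2)$. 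We compute these two companion determinants first.

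Next we apply Cramer's rule to the new system. The first summand of $\Delta'$ reproduces $n$ times the numerator of $\Ss^{(2)}_n$, which accounts for the terms linear in $n$ in the stated coefficients: $-3k(k+2)n$ times $(1,(2k+1)^2,3k^2+6k+1,\ldots)$. The remaining summands, after substituting the closed forms of $\Ss^{(2)}_n,\Rr_n,\Tt_n$, carry an extra factor $1/(-9k^2(k+2))$. Combined with the outer $1/\det\mathbf{M}$, this produces the denominator $9k^2(k+2)^2$ after cancelling a common factor $9k^2$. The result is the $n$-independent parts of $a_\ell,b_\ell$ and the constant $c_1$.

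The main obstacle is purely computational: the bookkeeping of the shift constants and the expansion of the $3\times3$ determinant with quadratic-form entries, followed by collecting coefficients of the six monomials. To control errors we check the final formula numerically for $n=0,1,2$ and a couple of values of $k$, for instance $k=1$, where $\Ww^{(2)}_0=0$ and $\Ww^{(2)}_1=1$.
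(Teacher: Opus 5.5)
Your route is correct, but it differs from the paper's. The paper does not weight the three identities of Proposition~\ref{prop:sumsquares}. Instead it posits $\Ww^{(2)}_n=(W_n+\mathcal{C})/\Dd$, where $W_n$ is a quadratic form in $P_{k,n+1},P_{k,n+2},P_{k,n+3}$ whose six coefficients are linear in $n$. It then imposes $W_n-W_{n-1}=\Dd\,nP_{k,n}^2$ as a polynomial identity in $P_{k,n},P_{k,n+1},P_{k,n+2}$. This gives a $12\times12$ block lower-triangular system, with two copies of a $6\times6$ block of determinant $-9k^2(k+2)$ on the diagonal. Finally it fixes $\mathcal{C}=-W_0$.

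You instead derive the answer directly. The weighted sums satisfy the same $3\times3$ system $\mathbf{M}$, with a right-hand side depending on $(\Ss^{(2)}_n,\Rr_n,\Tt_n)$. Altogether you solve a $6\times6$ block-triangular system with $\mathbf{M}$ twice on the diagonal, which is the counterpart of the paper's $\mathbf{A}$.

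What each approach buys:
\begin{itemize}
\item The paper's ansatz is economical: no closed forms for $\Rr_n,\Tt_n$ are needed, all lower-boundary data collapse into the single constant $W_0$, and correctness reduces to checking a local identity. Its price is having to guess the shape of the answer.
\item Your approach needs no guess, reuses Proposition~\ref{prop:sumsquares}, and explains the structure of the result. The terms of $\Ww^{(2)}_n$ carrying an explicit factor $n$ are exactly $n$ times the quadratic-form part of $\Ss^{(2)}_n$, which agrees with the stated $a_\ell,b_\ell$.
\item Your approach also explains the denominator. Every cofactor of $\mathbf{M}$ is divisible by $3k$, so $\mathbf{M}^{-1}=-\mathbf{B}/(3k(k+2))$ with $\mathbf{B}$ polynomial in $k$. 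The double inversion therefore produces exactly $9k^2(k+2)^2$, which justifies your ``cancel $9k^2$'' step.
\item The cost of your approach is two extra Cramer determinants and more shift bookkeeping.
\end{itemize}

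Three points in your description of $\Delta'_\ell$ need correcting, though none affects the method.
\begin{itemize}
\item The top boundary is not only $n$ times a form. For example, $\sum_{i=0}^n iP_{k,i+3}^2=\Ww^{(2)}_n-3\Ss^{(2)}_n+(n-2)P_{k,n+1}^2+(n-1)P_{k,n+2}^2+nP_{k,n+3}^2+4k^2+2$. Its $n$-free terms $-2P_{k,n+1}^2-P_{k,n+2}^2$ fit none of your three bullets; they feed into the $n$-independent parts of $a_\ell,b_\ell$.
\item The part of the right-hand side carrying the factor $n$ is $n(\Delta_1-1)$, $n\Delta_2$, $n\Delta_3$, not $n\Delta_1$. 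Literally using $n$ times the full numerator of $\Ss^{(2)}_n$ would create a constant term linear in $n$, which the stated $c_1$ does not have.
\item The coefficients of $\Ss^{(2)}_n,\Rr_n,\Tt_n$ are polynomials in $k$, not small integers. For instance, the first row gives $(3-9k^2)\Ss^{(2)}_n-4k^2\Rr_n$.
\end{itemize}
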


\begin{proof}
To obtain the closed form, we assume a form for $\Ww_n^{(2)}$ motivated by the structure of sums for linear recurrences:
\begin{equation*}
\Ww_n^{(2)} = \frac{W_n + \mathcal{C}}{\mathcal{D}},
\end{equation*}
where $\mathcal{C}\in\mathbb{R}$, $\mathcal{D}\in\mathbb{R}\setminus\{0\}$, and $W_n$ is the $n$-dependent part defined as:
\begin{align*}
W_n &:= \alpha_n P_{k,n+3}^2 + \beta_n P_{k,n+2}^2 + \gamma_n P_{k,n+1}^2 \\
&\quad + \eta_n P_{k,n+3}P_{k,n+2} + \theta_n P_{k,n+3}P_{k,n+1} + \mu_n P_{k,n+2}P_{k,n+1}.
\end{align*}
To determine the real coefficients $\alpha_n, \beta_n, \gamma_n, \eta_n, \theta_n$, and $\mu_n$ we use the recurrence relation $\Ww_n^{(2)} = \Ww_{n-1}^{(2)} + n P_{k,n}^2$,
which implies:
\begin{equation} \label{psi}
W_n - W_{n-1} = \mathcal{D} \, n \, P_{k,n}^2.
\end{equation}
Let $P_{k,n} = a$, $P_{k,n+1} = b$, $P_{k,n+2} = c$, and $P_{k,n+3} = 2k c + k b + a$. Substituting into the left-hand side of (\ref{psi}), expanding, and collecting coefficients of $n$ and the constant terms in the basis $\{a^2, b^2, c^2, ab, ac, bc\}$ yields the system. The coefficients of $n$ for terms other than $a^2$ must be zero, the coefficient of $n a^2$ must be $\mathcal{D}$, and all constant coefficients must be zero.
Let $\alpha_n = \alpha^{(1)} n + \alpha^{(0)}$, and similarly for the other coefficients. The resulting system is:
\begin{enumerate}
\item System for the $n$-dependent coefficients:
\begin{align*}
\alpha^{(1)} - \gamma^{(1)} &= \mathcal{D}, \\
\alpha^{(1)} k^{2} - \beta^{(1)} + \gamma^{(1)} + k \theta^{(1)} &= 0, \\
4 \alpha^{(1)} k^{2} - \alpha^{(1)} + \beta^{(1)} + 2 k \eta^{(1)} &= 0, \\
2 \alpha^{(1)} k - \mu^{(1)} + \theta^{(1)} &= 0, \\
4 \alpha^{(1)} k + \eta^{(1)} - \theta^{(1)} &= 0, \\
4 \alpha^{(1)} k^{2} + \eta^{(1)} k - \eta^{(1)} + 2 k \theta^{(1)} + \mu^{(1)} &= 0.
\end{align*}
\item System for the constant coefficients:
\begin{align*}
\alpha^{(0)} - \gamma^{(0)} + \gamma^{(1)} &= 0, \\
\alpha^{(0)} k^{2} - \beta^{(0)} + \beta^{(1)} + \gamma^{(0)} + k \theta^{(0)} &= 0, \\
4 \alpha^{(0)} k^{2} - \alpha^{(0)} + \alpha^{(1)} + \beta^{(0)} + 2 k \eta^{(0)} &= 0, \\
2 \alpha^{(0)} k - \mu^{(0)} + \mu^{(1)} + \theta^{(0)} &= 0, \\
4 \alpha^{(0)} k + \eta^{(0)} - \theta^{(0)} + \theta^{(1)} &= 0, \\
4 \alpha^{(0)} k^{2} + \eta^{(0)} k - \eta^{(0)} + \eta^{(1)} + 2 k \theta^{(0)} + \mu^{(0)} &= 0.
\end{align*}
\end{enumerate}
This is a linear system of 12 equations for the 12 coefficients, with $\mathcal{D}$ appearing in the inhomogeneous term.
The matrix $\mathbf{A}$ of the system $\mathbf{A}\mathbf{v} = \mathbf{b}$, where $\mathbf{v} = [\alpha^{(1)}, \beta^{(1)}, \gamma^{(1)}, \eta^{(1)}, \theta^{(1)}, \mu^{(1)}, \alpha^{(0)}, \beta^{(0)}, \gamma^{(0)}, \eta^{(0)}, \theta^{(0)}, \mu^{(0)}]^T$ and $\mathbf{b} = [\mathcal{D}, 0, 0, 0, 0, 0, 0, 0, 0, 0, 0, 0]^T $, is given by  a $12\times12$ block matrix $\mathbf{A}$ of the form:
\begin{equation*}
\bold{A}: = \begin{bmatrix}
\mathbf{M} & \mathbf{0} \\
\mathbf{K} & \mathbf{M}
\end{bmatrix}
\end{equation*}
where $\mathbf{M}$ and $\mathbf{K}$ are the $6\times6$ matrices:
\begin{equation*}
\mathbf{M} =\begin{bmatrix}
1 & 0 & -1 & 0 & 0 & 0 \\
k^{2} & -1 & 1 & 0 & k & 0 \\
4k^{2}-1 & 1 & 0 & 2k & 0 & 0 \\
2k & 0 & 0 & 0 & 1 & -1 \\
4k & 0 & 0 & 1 & -1 & 0 \\
4k^{2} & 0 & 0 & k-1 & 2k & 1
\end{bmatrix},\;\;
\mathbf{K} =\begin{bmatrix}
0 & 0 & 1 & 0 & 0 & 0 \\
0 & 1 & 0 & 0 & 0 & 0 \\
1 & 0 & 0 & 0 & 0 & 0 \\
0 & 0 & 0 & 0 & 0 & 1 \\
0 & 0 & 0 & 0 & 1 & 0 \\
0 & 0 & 0 & 1 & 0 & 0
\end{bmatrix}.
\end{equation*}
Using cofactor expansion we find that $\det(\mathbf{M}) = -9k^{3} - 18k^{2} = -9k^{2}(k + 2)$.
Since the upper-right block is the zero matrix, the determinant of $\mathbf{A}$ is:
$$\det(\mathbf{A}) = \det(\mathbf{M}) \cdot \det(\mathbf{M}) = 81k^{4}(k + 2)^{2}.$$
The polynomial coefficients are derived as follows:
\begin{itemize}
\item $\alpha_n = \frac{-3k^{2} - 6k}{9k^2(k+2)^2}n\Dd - \frac{7k^{2} + 14k + 9}{9k^2(k+2)^2}\Dd,$
\item $\beta_n = \frac{-12k^{4} - 36k^{3} - 27k^{2} - 6k}{9k^2(k+2)^2}n\Dd - \frac{28k^{4} + 72k^{3} + 60k^{2} + 20k + 9}{9k^2(k+2)^2}\Dd,$
\item $\gamma_n =\frac{-9k^{4} - 36k^{3} - 39k^{2} - 6k}{9k^2(k+2)^2}n\Dd - \frac{9k^{4} + 36k^{3} + 46k^{2} + 20k + 9}{9k^2(k+2)^2}\Dd,$
\item $\eta_n = \frac{12k^{3} + 30k^{2} + 12k}{9k^2(k+2)^2}n\Dd + \frac{28k^{3} + 64k^{2} + 46k + 6}{9k^2(k+2)^2}\Dd,$
\item $\theta_n = \frac{6k^{2} + 12k}{9k^2(k+2)^2}n\Dd + \frac{14k^{2} + 22k + 6}{9k^2(k+2)^2}\Dd,$
\item $\mu_n = \frac{-6k^{3} - 6k^{2} + 12k}{9k^2(k+2)^2}n\Dd - \frac{20k^{3} + 20k^{2} - 16k - 6}{9k^2(k+2)^2}\Dd.$
\end{itemize}
The constant $\mathcal{C}$ is determined by the boundary condition $\Ww_0^{(2)} = 0$, i.e.
\begin{align*}
\mathcal{C}=-W_0 &= -\alpha_0 P_{k,3}^2 - \beta_0 P_{k,2}^2 - \gamma_0 P_{k,1}^2  - \eta_0 P_{k,3}P_{k,2} - \theta_0 P_{k,3}P_{k,1} - \mu_0 P_{k,2}P_{k,1}.
\end{align*}
Substituting the initial conditions $P_{k,1}=1$, $P_{k,2}=2k$, and $P_{k,3}=4k^{2}+k$ we get $\mathcal{C}= \frac{k^{2}+2k+9}{9k^2(k+1)^2}\Dd.$
\end{proof}

\section{The $r$-circulant matrix}

\begin{definition}
Let $n\geq2,$ $r \in \mathbb{C}$ and let $\mathbf{a} = (a_0, a_1, \dots, a_{n-1})\in\mathbb{C}^{n-1}$ be a given vector. 
The $r$-circulant matrix generated by $\mathbf{a}$ is the matrix defined by:
$$\mathrm{Circ}_r(a_0, a_1, \dots, a_{n-1})  :=
\begin{bmatrix}
a_0 & a_1 & a_2 & \dots & a_{n-1} \\
r a_{n-1} & a_0 & a_1 & \dots & a_{n-2} \\
r a_{n-2} & r a_{n-1} & a_0 & \dots & a_{n-3} \\
\vdots & \vdots & \vdots & \ddots & \vdots \\
r a_1 & r a_2 & r a_3 & \dots & a_0
\end{bmatrix}.
$$
\end{definition}
Two important special cases occur when $r=1$ and $r=-1$. In the case $r=1$, the $r$-circulant matrix reduces to a classical \emph{circulant matrix}, 
whereas for $r=-1$ it becomes a \emph{skew-circulant matrix}. Additionally, when $r=0$, the $r$-circulant matrix reduces to an upper triangular matrix. Circulant matrices have been widely studied due to their rich algebraic structure and applications, with foundational contributions by Davis, see \cite{Davis}. For further details, see \cite{Kra}.

\medskip

Throughout the paper, we study $r$-circulant matrices whose generating vector is formed by the sequence $(P_{k,0}, P_{k,1}, \ldots, P_{k,n-1})$ for $n\geq2$. 
Such matrices will be denoted by: 
$$\bold{P}_n:=\mathrm{Circ}_r(P_{k,0},P_{k,1},\ldots,P_{k,n-1}).$$

\subsection{Frobenius and entrywise $\ell_1$-norms}

The Frobenius norm is a matrix norm that provides a measure of the “magnitude” of the matrix entries. 
For $r$-circulant matrices, this norm exhibits a structured symmetry that allows for a reduction from $n^2$ terms to a weighted sum of $n$ terms.
Similarly, the entrywise $\ell_1$-norm, defined as the sum of the absolute values of all entries, admits a simplified representation due to the same structural properties of $r$-circulant matrices.

\medskip

Let $n\geq2$ and $\bold{A}_n = [a_{i,j}] \in \mathbb{C}^{n \times n}$. 
The Frobenius norm and the entrywise $\ell_1$-norm of $\bold{A}_n$ are given by:
\begin{equation*} \label{norms_def}
\|\bold{A}_n\|_F = \sqrt{\sum_{i=1}^{n} \sum_{j=1}^{n} |a_{i,j}|^2}, 
\qquad
\|\bold{A}_n\|_1 = \sum_{i=1}^{n} \sum_{j=1}^{n} |a_{i,j}|.
\end{equation*}

To evaluate the double summation for $\bold{P}_n=\mathrm{Circ}_r(P_{k,0}, P_{k,1}, \ldots, P_{k,n-1})$, we categorize the entries based on their occurrence in the matrix:
\begin{enumerate}
\item 
For a fixed index $l \in \{0, 1, \dots, n-1\}$, the element $P_{k,l}$ appears in the upper triangular part (including the diagonal) exactly $n-l$ times. In these instances, the entries are not multiplied by $r$.
\item 
Due to the $r$-cyclic shift property, the same element $P_{k,l}$ appears in the strictly lower triangular part exactly $l$ times, where each entry is multiplied by the parameter $r$.
\end{enumerate}

We compute the Frobenius and entrywise $\ell_1$-norms of $\bold{P}_n$ by summing the squares and absolute values of its entries, respectively. We obtain:
\begin{align*}
\|\bold{P}_n\|_F^2
&= \sum_{j=0}^{n-1} \left( (n-j)|P_{k,j}|^2 + j|r P_{k,j}|^2 \right)
= n\sum_{j=0}^{n-1} P_{k,j}^2 + (|r|^2-1)\sum_{j=0}^{n-1} jP_{k,j}^2,\\
\|\bold{P}_n\|_1
&= \sum_{j=0}^{n-1} \left( (n-j)|P_{k,j}| + j|r P_{k,j}| \right)
= n\sum_{j=0}^{n-1} |P_{k,j}| + (|r|-1)\sum_{j=0}^{n-1} j|P_{k,j}|.
\end{align*}

Since the initial values $0$, $1$, and $2k$ are nonnegative and the recurrence relation has positive coefficients, it follows by mathematical induction that $|P_{k,n}| = P_{k,n}$ for all $n \in \mathbb{N}$. This shows that both the Frobenius norm and the entrywise $\ell_1$-norm of an $r$-circulant matrix depend only on the initial $n$ terms of the sequence and the magnitude of the parameter $r$. The following result summarizes the above computations.

\begin{theorem}\label{them_1}
Let $n\geq2$ be an integer and $\bold{P}_n=\mathrm{Circ}_r(P_{k,0}, P_{k,1}, \ldots, P_{k,n-1})$ be the $r$-circulant matrix. Then,
\begin{equation*}
\|\bold{P}_n\|_F = \sqrt{n\Ss^{(2)}_{n-1} + (|r|^2 - 1)\Ww^{(2)}_{n-1}},
\end{equation*}
and
\begin{equation*}
\|\bold{P}_n\|_1 = n \Ss^{(1)}_{n-1} + (|r| - 1)\Ww^{(1)}_{n-1}.
\end{equation*}
\end{theorem}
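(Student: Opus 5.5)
The proof follows directly from the entry-counting argument carried out just before the statement, combined with the definitions of the partial sums $\Ss^{(1)}_{n}$, $\Ss^{(2)}_{n}$, $\Ww^{(1)}_{n}$, $\Ww^{(2)}_{n}$. I will first make the counting precise. The $(i,j)$ entry of $\bold{P}_n$ (indices $0\le i,j\le n-1$) equals $P_{k,j-i}$ when $j\ge i$ and $rP_{k,n+j-i}$ when $j<i$. For fixed $l\in\{0,\dots,n-1\}$, the pairs with $j-i=l$ are $i=0,\dots,n-1-l$, so there are exactly $n-l$ of them. The pairs with $j<i$ and $n+j-i=l$ are those with $i-j=n-l$, namely $j=0,\dots,l-1$, so there are exactly $l$ of them. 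This partitions all $n^2$ entries, since $\sum_l\big((n-l)+l\big)=n^2$.

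Next I will record the sign fact. Induction on the recurrence (\ref{eq:recurrence}), with nonnegative initial values and positive coefficients, gives $P_{k,l}\ge0$ for all $l$. Hence $|P_{k,l}|=P_{k,l}$ and $|rP_{k,l}|=|r|P_{k,l}$.

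Summing $|a_{i,j}|^2$ over the partition gives $\|\bold{P}_n\|_F^2=\sum_{l=0}^{n-1}\big((n-l)+l|r|^2\big)P_{k,l}^2=n\sum_{l=0}^{n-1}P_{k,l}^2+(|r|^2-1)\sum_{l=0}^{n-1}lP_{k,l}^2$. The two sums are by definition $\Ss^{(2)}_{n-1}$ and $\Ww^{(2)}_{n-1}$. Taking square roots is legitimate because the left side is a sum of squares; equivalently, the expression under the root is nonnegative even when $|r|<1$, since each coefficient $(n-l)+l|r|^2$ is positive. The same computation with $|a_{i,j}|$ in place of $|a_{i,j}|^2$ gives $\|\bold{P}_n\|_1=n\Ss^{(1)}_{n-1}+(|r|-1)\Ww^{(1)}_{n-1}$.

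There is no real obstacle. The only point needing care is the index bookkeeping: the norm sums run from $0$ to $n-1$, so the partial sums must be evaluated at $n-1$ rather than $n$. If closed forms are wanted, the final step is simply to substitute Propositions \ref{prop:sumsquares} and \ref{prop:weightedsquares}, and the earlier sum formulas, with $n$ replaced by $n-1$.
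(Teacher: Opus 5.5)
Your proposal is correct and follows the paper's argument. Like the paper, you count that $P_{k,l}$ appears $n-l$ times unscaled and $l$ times scaled by $r$, use $P_{k,l}\ge 0$, and sum to express both norms through $\Ss^{(2)}_{n-1},\Ww^{(2)}_{n-1}$ and $\Ss^{(1)}_{n-1},\Ww^{(1)}_{n-1}$; your explicit $(i,j)$-indexing just makes precise the entry count that the paper states informally.
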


\subsection{Spectral norm}

The spectral norm of an $n \times n$ matrix $\bold{A}_n=[a_{i,j}]\in\mathbb{C}^{n\times n}$ is defined  as follows:
\begin{equation*} \label{spektralna_def}
\|\mathbf{A}_{n}\|_2 = \sqrt{\max_{1 \le i \le n} |\lambda_{i}(\bold{A}_n^*\bold{A}_n)|},
\end{equation*}
where $\lambda_{i}(\bold{A}_n^*\bold{A}_n)$ are the eigenvalues of the matrix $\bold{A}_n^*\bold{A}_n$ and $\bold{A}_n^*$ is the conjugate transpose of $\bold{A}_n$. The following inequality holds for any matrix:
\begin{equation} \label{eq:Zielke}
\frac{1}{\sqrt{n}}\|\bold{A}_n\|_F \le \|\bold{A}_n\|_2 \le \|\bold{A}_n\|_F.
\end{equation}
For more details see \cite{Zielke}.
The maximum column length norm $c_{1}(\bold{A}_n)$ and the maximum row length norm $r_{1}(\bold{A}_n)$ of the matrix $\bold{A}_n$
are defined as follows:
\begin{equation*}
c_{1}(\bold{A}_n) := \max_{1 \le j \le n} \left( \sum_{i=1}^{n} |a_{i,j}|^{2} \right)^{\frac{1}{2}},\qquad
r_{1}(\bold{A}_n) := \max_{1 \le i \le n} \left( \sum_{j=1}^{n} |a_{i,j}|^{2} \right)^{\frac{1}{2}}.
\end{equation*}

There is a relation between $\|\bold{A}_n\|_{2}$, $c_{1}(\bold{A}_n)$ and $r_{1}(\bold{A}_n)$ norms as follows.
Additional background on matrix norms can be found in \cite{Horn2, Horn1}.

\begin{lemma}\label{nejednakosti}
For any matrices $\bold{A} = [a_{i,j}]\in M_{n,n}(\mathbb{C})$ and $\bold{B}= [b_{i,j}]\in M_{n,n}(\mathbb{C})$, we
have:
\begin{equation*}
\|\bold{A} \circ \bold{B}\|_{2} \le r_{1}(\bold{A})c_{1}(\bold{B}),
\end{equation*}
where $\bold{A} \circ \bold{B}$ is Hadamard product, defined by $\bold{A} \circ \bold{B} := [a_{i,j} b_{i,j}]$.
\end{lemma}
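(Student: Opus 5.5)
The plan is to prove $\|\mathbf{A}\circ\mathbf{B}\|_2\le r_1(\mathbf{A})c_1(\mathbf{B})$ directly from the variational characterization $\|\mathbf{C}\|_2=\max_{\|x\|_2=1}\|\mathbf{C}x\|_2$. The whole argument is one application of the Cauchy--Schwarz inequality followed by an interchange of summations.

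Fix a unit vector $x\in\mathbb{C}^n$ and set $y=(\mathbf{A}\circ\mathbf{B})x$, so that $y_i=\sum_{j=1}^n a_{i,j}b_{i,j}x_j$. For each fixed row $i$, split the product as $a_{i,j}\cdot(b_{i,j}x_j)$ and apply Cauchy--Schwarz in the index $j$:
\begin{equation*}
|y_i|^2\le\Big(\sum_{j=1}^n|a_{i,j}|^2\Big)\Big(\sum_{j=1}^n|b_{i,j}|^2|x_j|^2\Big)\le r_1(\mathbf{A})^2\sum_{j=1}^n|b_{i,j}|^2|x_j|^2 .
\end{equation*}
The second inequality uses the definition of $r_1(\mathbf{A})$ as the maximal row length.

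Next, sum over $i$ and interchange the order of summation:
\begin{equation*}
\|y\|_2^2\le r_1(\mathbf{A})^2\sum_{j=1}^n|x_j|^2\sum_{i=1}^n|b_{i,j}|^2\le r_1(\mathbf{A})^2c_1(\mathbf{B})^2\sum_{j=1}^n|x_j|^2=r_1(\mathbf{A})^2c_1(\mathbf{B})^2 .
\end{equation*}
Here the inner sum is the squared length of column $j$ of $\mathbf{B}$, which is bounded by $c_1(\mathbf{B})^2$. Taking the square root and then the maximum over unit vectors $x$ gives the claim.

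The only delicate point is choosing the split correctly: $x_j$ must stay attached to $b_{i,j}$, not to $a_{i,j}$. That way the row sums land on $\mathbf{A}$ and the column sums on $\mathbf{B}$ after the interchange. Everything else is routine. It also helps to note that the definition of $\|\cdot\|_2$ given in the paper, as the square root of the largest eigenvalue of $\mathbf{A}_n^*\mathbf{A}_n$, coincides with the operator norm induced by the Euclidean vector norm; this is what justifies the variational characterization used above.
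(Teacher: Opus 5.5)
Your proof is correct and complete. The Cauchy--Schwarz split $a_{i,j}\cdot(b_{i,j}x_j)$ in each row, followed by interchanging the summations, yields $\|(\mathbf{A}\circ\mathbf{B})x\|_2\le r_1(\mathbf{A})\,c_1(\mathbf{B})\,\|x\|_2$. Your remark identifying the paper's eigenvalue definition of $\|\cdot\|_2$ with the induced operator norm (the Rayleigh quotient of the Hermitian positive semidefinite matrix $\mathbf{A}^*\mathbf{A}$) closes the only formal point.

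The paper itself gives no proof of this lemma. It states it as a known inequality (Mathias's bound, with background deferred to Horn and Johnson). So there is no argument to compare against: your proof supplies an elementary, self-contained justification where the paper relies on the literature.

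If you want a more symmetric write-up, you can use the same idea on the bilinear form. Bound $|y^*(\mathbf{A}\circ\mathbf{B})x|=\bigl|\sum_{i,j}(\bar y_i a_{i,j})(b_{i,j}x_j)\bigr|$ for unit vectors $x,y$ by a single Cauchy--Schwarz over the pairs $(i,j)$. This gives the same constant $r_1(\mathbf{A})c_1(\mathbf{B})$ without any interchange of summation.
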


The next theorem presents upper and lower bounds for the spectral norm of $r$-circulant matrix $\bold{P}_n = \text{Circ}_r(P_{k,0}, P_{k,1}, \dots, P_{k,n-1})$.

\begin{theorem}\label{main_thm}
Let $n\geq2$ be an integer and $\bold{P}_n = \mathrm{Circ}_r(P_{k,0}, P_{k,1}, \dots, P_{k,n-1})$ be a $r$-circulant matrix. Then:
\begin{equation*}
\sqrt{\Ss_{n-1}^{(2)}+\frac{|r|^2-1}{n}\Ww^{(2)}_{n-1}} \leq \|\bold{P}_n\|_{2} \leq \max\{|r|,1\}\Ss^{(1)}_{n-1}.
\end{equation*}
\end{theorem}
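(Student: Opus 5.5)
The lower bound follows from the left inequality in (\ref{eq:Zielke}) together with Theorem~\ref{them_1}. Indeed, $\|\bold{P}_n\|_2 \ge \frac{1}{\sqrt n}\|\bold{P}_n\|_F$, and by Theorem~\ref{them_1}
\[
\frac{1}{n}\|\bold{P}_n\|_F^2=\Ss^{(2)}_{n-1}+\frac{|r|^2-1}{n}\Ww^{(2)}_{n-1}.
\]
Taking square roots gives the stated lower bound. This step is immediate.

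For the upper bound we use Lemma~\ref{nejednakosti}. The plan is to factor $\bold{P}_n=\bold{A}\circ\bold{B}$ so that $r_1(\bold{A})c_1(\bold{B})$ is at most $\max\{|r|,1\}\Ss^{(1)}_{n-1}$. Since all $P_{k,j}\ge0$, we may write $P_{k,j}=\sqrt{P_{k,j}}\sqrt{P_{k,j}}$. The factor $r$, which sits only in the strictly lower triangular part, is placed in one factor.

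\textbf{Case $|r|\ge1$.} Define
\begin{itemize}
\item $\bold{A}=[a_{i,j}]$ with $a_{i,j}=r\sqrt{P_{k,j-i+n}}$ for $i>j$ and $a_{i,j}=\sqrt{P_{k,j-i}}$ for $i\le j$;
\item $\bold{B}=[b_{i,j}]$ with $b_{i,j}=\sqrt{P_{k,(j-i)\bmod n}}$, the ordinary circulant of square roots.
\end{itemize}
Then $\bold{A}\circ\bold{B}=\bold{P}_n$.

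Each column of $\bold{B}$ contains every $\sqrt{P_{k,l}}$, $l=0,\dots,n-1$, exactly once. Hence $c_1(\bold{B})=\sqrt{\Ss^{(1)}_{n-1}}$.

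Row $i$ of $\bold{A}$ has squared length $\sum_{l} P_{k,l}$, with the terms that wrap around multiplied by $|r|^2$. This is at most $|r|^2\Ss^{(1)}_{n-1}$, attained at the last row, or bounded via $|r|^2\ge1$. So $r_1(\bold{A})\le|r|\sqrt{\Ss^{(1)}_{n-1}}$, and the product is $|r|\Ss^{(1)}_{n-1}$.

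\textbf{Case $|r|<1$.} Use the same factorization. The row sums now satisfy $\sum(\text{weights})P_{k,l}\le\Ss^{(1)}_{n-1}$, since each weight is $1$ or $|r|^2<1$. Hence $r_1(\bold{A})\le\sqrt{\Ss^{(1)}_{n-1}}$, giving the bound $\Ss^{(1)}_{n-1}$.

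Combining the two cases yields $\|\bold{P}_n\|_2\le\max\{|r|,1\}\Ss^{(1)}_{n-1}$.

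The main point needing care is the bookkeeping of index placement. We must confirm that the entries of $\bold{P}_n$ at position $(i,j)$ are $P_{k,j-i}$ above and on the diagonal and $rP_{k,n+j-i}$ below it, so that each row and column of these factors indeed runs over all indices $0,\dots,n-1$ exactly once. The rest is direct. One should also note that real nonnegativity of $P_{k,j}$, established earlier by induction, is what makes the square roots legitimate.
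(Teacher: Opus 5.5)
Your proposal is correct and follows the paper's proof essentially step for step. The lower bound comes from $\|\mathbf{P}_n\|_2\ge n^{-1/2}\|\mathbf{P}_n\|_F$ combined with Theorem~\ref{them_1}, and the upper bound comes from Lemma~\ref{nejednakosti} applied to the same Hadamard factorization $\mathbf{P}_n=\mathrm{Circ}_r(\sqrt{P_{k,0}},\dots,\sqrt{P_{k,n-1}})\circ\mathrm{Circ}_1(\sqrt{P_{k,0}},\dots,\sqrt{P_{k,n-1}})$, with the first factor's row length maximized at the last row (using $P_{k,0}=0$) when $|r|\ge1$ and at the first row when $|r|<1$.
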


\begin{proof}
The lower bound for the spectral norm is derived by applying formula $(\ref{eq:Zielke})$ and Theorem \ref{them_1}.
We proceed to give an upper bound for the spectral norm of $\bold{P}_n$.
\begin{enumerate}
\item For the case $|r|\geq 1$, let $\bold{A}=\mathrm{Circ}_r(P_{k,0}^{1/2},P_{k,1}^{1/2},\ldots,P_{k,n-1}^{1/2})$ and
$\bold{B}= \mathrm{Circ}_1(P_{k,0}^{1/2},P_{k,1}^{1/2},\ldots,P_{k,n-1}^{1/2})$, so that $\bold{P}_n=\bold{A}\circ\bold{B}$. Hence, we obtain:
\begin{align*}
r_{1}(\bold{A}) &= \max_{1 \le i \le n} \left( \sum_{j=1}^{n} |a_{i,j}|^{2} \right)^{\frac{1}{2}}=\sqrt{P_{k,0}+|r|^{2}\sum_{j=1}^{n-1}P_{k,j}}=
\sqrt{|r|^{2}\Ss^{(1)}_{n-1}},\\
c_{1}(\bold{B}) &= \max_{1 \le j \le n} \left( \sum_{i=1}^{n} |b_{i,j}|^{2} \right)^{\frac{1}{2}}=\sqrt{\sum_{i=1}^{n-1}P_{k,i}}=
\sqrt{\Ss^{(1)}_{n-1}}.
\end{align*}
Applying Lemma \ref{nejednakosti} we conclude that
$\|\bold{P}_n\|_{2} \le r_{1}(\bold{A})c_{1}(\bold{B})=|r|\Ss^{(1)}_{n-1}.$
\item For the case $|r|< 1$, let  $\bold{A}=\mathrm{Circ}_r(P_{k,0}^{1/2},P_{k,1}^{1/2},\ldots,P_{k,n-1}^{1/2})$ and
$\bold{B}= \mathrm{Circ}_1(P_{k,0}^{1/2},P_{k,1}^{1/2},\ldots,P_{k,n-1}^{1/2})$, so that $\bold{P}_n=\bold{A}\circ\bold{B}$. Hence, we have:
\begin{align*}
r_{1}(\bold{A}) &= \max_{1 \le i \le n} \left( \sum_{j=1}^{n} |a_{i,j}|^{2} \right)^{\frac{1}{2}}=\sqrt{\sum_{j=0}^{n-1}P_{k,j}}=
\sqrt{\Ss^{(1)}_{n-1}},\\
c_{1}(\bold{B}) &= \max_{1 \le j \le n} \left( \sum_{i=1}^{n} |b_{i,j}|^{2} \right)^{\frac{1}{2}}=\sqrt{\sum_{i=1}^{n-1}P_{k,i}}=\sqrt{\Ss^{(1)}_{n-1}}.
\end{align*}
By Lemma \ref{nejednakosti}, we obtain that
$\|\bold{P}_n\|_{2} \le r_{1}(\bold{A})c_{1}(\bold{B})=\Ss^{(1)}_{n-1}.$
\end{enumerate}

\vspace*{-5mm}
\end{proof}

\begin{example}
\emph{For $k=1$, the sequence reduces to the third-order Pell sequence, for which $r$-circulant matrices were previously studied in \cite{Soykan4}. The norm bounds derived in the Theorem \ref{main_thm} provide improvements over those established in the aforementioned work. We now present refined spectral norm estimates for the parameter values considered at the end of that paper in the case when $|r|\geq1$.}
\begin{table}[h]
\centering
\begin{tabular}{|c|c|c|c|c|c|c|c|}
\hline
\makecell{$n$} & $r$ 
& \makecell{Old lower\\bound} 
& \makecell{New lower\\bound} 
& $\|\bold{P}_n\|_2$ 
& \makecell{New upper\\bound} 
& \makecell{Old upper\\bound} \\ \hline

5 & 1      & 14.11 & 14.11 & 21.00 & 21.00  & 199.50 \\
5 & 1.08 & 14.11 & 14.35 & 22.19 & 22.68  & 152.35 \\
5 & 1.70 & 14.11 & 16.68 & 32.72 & 35.70  & 339.15 \\
5 & 2      & 14.11 & 18.03 & 38.11 & 42.00   & 399.00 \\
5 & 4      & 14.11 & 28.79 & 74.76 & 84.00 & 798.00 \\
5 & 5      & 14.11 & 34.74 & 93.20 & 105.00 & 997.50 \\ \hline

8 & 1    & 232.68 & 232.68 & 352.00  & 352.00 &54140.50 \\
8 & 1.08 & 232.68 & 239.15 & 375.06  & 380.16 &58471.74 \\
8 & 1.70 & 232.68 & 298.02 &571.06  & 598.40 &92038.85 \\
8 & 2    & 232.68 & 330.42 &668.84  & 704.00 &108281.00 \\
8 & 4    & 232.68 & 373.88 &1326.34 & 1498.00 &216562.00 \\
8 & 5    & 232.68 & 703.18 &1655.92 & 1760.00& 270702.50 \\ \hline

\end{tabular}
\caption{Lower and upper bounds together with the spectral norm $\|\bold{P}_n\|_2$ for different values of $n$ and $r$, for $|r|\geq1$.}
\label{tab:norm_bounds}
\end{table}
\end{example}

\subsection{Eigenvalues and determinant}

A central role in the analysis of $r$-circulant matrices is played by their complete spectral characterization, comprising both eigenvalues and eigenvectors. In fact, this structure leads to a full diagonalization of the matrix, since all spectral components admit explicit closed-form expressions. For more details, see \cite[Lemma 4]{Cline}.

\begin{proposition}\label{prop:eigen-general}
Let $n\geq2$ be an integer and $r\in\mathbb{C}\setminus\{0\}$. The eigenvalues of an $r$-circulant matrix $\mathrm{Circ}_r(a_0, a_1, \dots, a_{n-1})$ are given by:
\begin{equation*}
\lambda_m = \sum_{j=0}^{n-1} a_{j} \rho_m^j,\qquad m=0,1,\ldots,n-1,
\end{equation*}
where $\rho_m = r^{1/n} \omega^m_n$ and $\omega_n = e^{2\pi i / n}$ is a primitive $n$-th root of unity satisfying $\omega^n = 1$, and hence $\rho_m^n = r$.
The corresponding eigenvectors are explicitly of the form:
\begin{equation*}
\bold{v}^{(m)} = \left(1, \rho_m, \rho_m^2, \dots, \rho_m^{n-1}\right)^T, \quad m = 0,1,\dots,n-1.
\end{equation*}
\end{proposition}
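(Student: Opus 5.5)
We will verify directly that each $\bold{v}^{(m)}$ is an eigenvector of $\bold{C}:=\mathrm{Circ}_r(a_0,\dots,a_{n-1})$ with eigenvalue $\lambda_m$. We will then check that the $n$ vectors $\bold{v}^{(m)}$ are linearly independent, so that the list $\lambda_0,\dots,\lambda_{n-1}$ exhausts the spectrum with multiplicities. Throughout we fix one $n$-th root $r^{1/n}$ of $r$. This is possible since $r\neq 0$, and it makes the numbers $\rho_m=r^{1/n}\omega_n^m$ pairwise distinct with $\rho_m^n=r$.

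First we describe the entries of $\bold{C}$. From the definition, the $(i,j)$ entry (indices $0,\dots,n-1$) equals $a_{j-i}$ if $j\ge i$ and $r\,a_{n+j-i}$ if $j<i$. For row $i$ we compute
\[
(\bold{C}\bold{v}^{(m)})_i=\sum_{j=i}^{n-1}a_{j-i}\rho_m^{j}+\sum_{j=0}^{i-1}r\,a_{n+j-i}\rho_m^{j}.
\]
In the first sum we substitute $l=j-i$, which gives $\rho_m^i\sum_{l=0}^{n-1-i}a_l\rho_m^l$. In the second sum we substitute $l=n+j-i$, so that $l$ runs over $n-i,\dots,n-1$. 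Using $r=\rho_m^n$, the factor becomes $r\rho_m^{j}=\rho_m^{n}\rho_m^{l-n+i}=\rho_m^i\rho_m^l$. Adding the two pieces yields $\rho_m^i\sum_{l=0}^{n-1}a_l\rho_m^l=\lambda_m\,(\bold{v}^{(m)})_i$. Hence $\bold{C}\bold{v}^{(m)}=\lambda_m\bold{v}^{(m)}$. The only delicate point in the whole argument is this index bookkeeping in the wrap-around part. It works precisely because $\rho_m^n=r$, and the choice of a single branch $r^{1/n}$ for all $m$ is what guarantees this.

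Next we show independence. The matrix $\bold{V}=[\bold{v}^{(0)}\,\cdots\,\bold{v}^{(n-1)}]$ is the Vandermonde matrix in the nodes $\rho_0,\dots,\rho_{n-1}$. These nodes are distinct, because $\omega_n$ is primitive and $r^{1/n}\neq0$. Therefore $\det\bold{V}=\prod_{s<t}(\rho_t-\rho_s)\neq0$. We conclude that $\bold{C}=\bold{V}\,\mathrm{diag}(\lambda_0,\dots,\lambda_{n-1})\,\bold{V}^{-1}$. Thus the eigenvalues of $\bold{C}$, counted with algebraic multiplicity, are exactly $\lambda_0,\dots,\lambda_{n-1}$, with the stated eigenvectors. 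This is also consistent with \cite[Lemma 4]{Cline}.

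A more structural route would also work. One writes $\bold{C}=\sum_j a_j\bold{\Pi}_r^j$, where $\bold{\Pi}_r=\mathrm{Circ}_r(0,1,0,\dots,0)$ satisfies $\bold{\Pi}_r^n=r\bold{I}$ and $\bold{\Pi}_r\bold{v}^{(m)}=\rho_m\bold{v}^{(m)}$. However, the direct computation above is shorter and sufficient.
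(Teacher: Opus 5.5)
Your proof is correct. The entry description ($a_{j-i}$ for $j\ge i$, $r\,a_{n+j-i}$ for $j<i$) matches the definition. The reindexing of the wrap-around part correctly uses $r\rho_m^{j}=\rho_m^{i+l}$. The Vandermonde step shows that $\lambda_0,\dots,\lambda_{n-1}$ are the eigenvalues counted with algebraic multiplicity.

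One small misattribution: $\rho_m^n=r$ holds for any choice of $n$-th root of $r$, even one that varies with $m$. What the single fixed branch actually buys is that the $\rho_m$ are pairwise distinct, and that is what your Vandermonde argument needs.

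The paper gives no argument of its own; it defers to the cited Lemma 4 of Cline et al. Right after the statement it indicates the structural picture: $\mathrm{Circ}_r(a_0,\dots,a_{n-1})=\Psi_a(\mathbf{S}_n)$ with $\mathbf{S}_n=\mathrm{Circ}_r(0,1,0,\dots,0)$ and $\mathbf{S}_n^n=r\mathbf{I}_n$, whence $\lambda_m=\Psi_a(\rho_m)$. This is the route you mention at the end and set aside.

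Your entrywise computation is more elementary and fully self-contained. Its Vandermonde step explicitly produces the diagonalization $\mathbf{C}=\mathbf{V}\,\mathrm{diag}(\lambda_m)\,\mathbf{V}^{-1}$. That is exactly what justifies $\det(\mathbf{P}_n)=\prod_m\lambda_m$ in the determinant theorem, where the paper simply asserts diagonalizability.

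The structural route is shorter conceptually, and it is the viewpoint the paper exploits afterwards. For instance, the identity for $\mathrm{Circ}_r(1,-2k,-k,-1,0,\dots,0)\mathbf{P}_n$ is just polynomial multiplication in $\mathbf{S}_n$ reduced by $\mathbf{S}_n^n=r\mathbf{I}_n$. To extract multiplicities from it, one still needs that the characteristic polynomial of $\mathbf{S}_n$ is $x^n-r$, with the $n$ distinct roots $\rho_m$. Then $\mathbf{S}_n$, and hence every polynomial in it, is diagonalized by the same Vandermonde matrix $\mathbf{V}$.
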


An $r$-circulant matrix $\mathrm{Circ}_r(a_0, a_1, \dots, a_{n-1})$ can be naturally associated with its \emph{generating polynomial}:
\[
\Psi_a(x) := \sum_{i=0}^{n-1} a_i x^i.
\]
In particular, the corresponding circulant structure is obtained by evaluating this polynomial in the shift matrix 
$\bold{S}_n:=\mathrm{Circ}_r(0,1,0,\dots,0),$ so that the classical circulant matrix can be written as:
$$\Psi_a(\bold{S}_n) = \mathrm{Circ}_r(a_0, a_1, \dots, a_{n-1}).$$
It turns out that the eigenvalues are also determined by this polynomial, namely $\lambda_m = \Psi_a(\rho_m)$, for $m=0,1,\ldots,n-1$.

\medskip

To evaluate $\lambda_m$, we make use of the recurrence relation \eqref{eq:recurrence} and its associated characteristic polynomial $\phi(x)=x^3-2kx^2-kx-1$. For later convenience, we also introduce the polynomials:
\begin{equation*} \label{eq:denom-poly}
\Psi(x):=\sum_{i=0}^{n-1}P_{k,i}x^i,\qquad \psi(x):=1-2kx-kx^2-x^3.
\end{equation*}
Note that the polynomial $\psi(x)$ satisfies $\psi(x)=x^3\phi(1/x)$.

\begin{lemma} \label{lem:sum-identity}
Let $n\geq3$ be an integer, $\psi(x)=1-2kx-kx^2-x^3$, and $\Psi(x) = \sum_{j=0}^{n-1} P_{k,j}x^j$. Then:
\begin{equation*} \label{eq:ds-identity}
\psi(x)\Psi(x) = x-P_{k,n}x^n-(kP_{k,n-1}+P_{k,n-2})x^{n+1}-P_{k,n-1}x^{n+2}.
\end{equation*}
\end{lemma}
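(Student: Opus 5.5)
The plan is to expand the product $\psi(x)\Psi(x)$ coefficient by coefficient and let the recurrence \eqref{eq:recurrence} kill almost all of the coefficients. Write
\[
\psi(x)\Psi(x)=\sum_{j=0}^{n-1}P_{k,j}x^j-2k\sum_{j=0}^{n-1}P_{k,j}x^{j+1}-k\sum_{j=0}^{n-1}P_{k,j}x^{j+2}-\sum_{j=0}^{n-1}P_{k,j}x^{j+3},
\]
and shift indices so that each sum runs over powers $x^m$. The coefficient of $x^m$ is then
\[
c_m=P_{k,m}-2kP_{k,m-1}-kP_{k,m-2}-P_{k,m-3},
\]
with the convention that $P_{k,j}=0$ whenever $j<0$ or $j\ge n$. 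This convention simply records which terms are present in the truncated polynomial $\Psi$.

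I will then treat the ranges of $m$ separately.

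\textbf{Low degrees, $0\le m\le 2$.} Using only the initial conditions:
\begin{itemize}
\item $c_0=P_{k,0}=0$;
\item $c_1=P_{k,1}-2kP_{k,0}=1$;
\item $c_2=P_{k,2}-2kP_{k,1}-kP_{k,0}=2k-2k=0$.
\end{itemize}
This produces the term $x$.

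\textbf{Middle degrees, $3\le m\le n-1$.} All four indices lie in $[0,n-1]$, so $c_m=0$ directly by \eqref{eq:recurrence}. The hypothesis $n\ge3$ guarantees that these ranges are nonoverlapping and well defined.

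\textbf{High degrees, $m=n,n+1,n+2$.} This is the step needing care, because some terms are truncated. Here I add and subtract the missing terms and use the recurrence to rewrite what remains.
\begin{itemize}
\item For $m=n$ the truncated term is $P_{k,n}$ itself. So $c_n=-2kP_{k,n-1}-kP_{k,n-2}-P_{k,n-3}=-P_{k,n}$, by \eqref{eq:recurrence} at index $n$.
\item For $m=n+1$, $c_{n+1}=-kP_{k,n-1}-P_{k,n-2}$, exactly as stated.
\item For $m=n+2$, $c_{n+2}=-P_{k,n-1}$.
\end{itemize}
All coefficients with $m\ge n+3$ vanish, since $\deg\psi\Psi\le n+2$.

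Collecting the three ranges gives the identity. The only real obstacle is the index bookkeeping at the top end, in particular recognising $c_n=-P_{k,n}$ through the recurrence. That step also requires $n\ge3$ so that $P_{k,n-3}$ is a genuine term of $\Psi$. The case $n=3$ should be checked against this, and the argument still holds there because $P_{k,0}$ is present in $\Psi$.
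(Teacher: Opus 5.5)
Your proof is correct and follows the paper's argument essentially step for step. You expand $\psi(x)\Psi(x)$ into shifted sums, let the recurrence annihilate the coefficients of $x^m$ for $3\le m\le n-1$, and read off $c_0=0$, $c_1=1$, $c_2=0$ from the initial conditions. You then identify the three tail coefficients, obtaining $c_n=-P_{k,n}$ from the recurrence at index $n$.

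The only blemish is notational. Your convention ``$P_{k,j}=0$ for $j\ge n$'' clashes with using the true value of $P_{k,n}$ in $c_n=-P_{k,n}$. It is cleaner to say that terms with index outside $[0,n-1]$ are simply absent from $\Psi$, rather than redefining $P_{k,j}$.
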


\begin{proof}
Multiplying $\Psi(x)$ by $\psi(x)$ we get:
\begin{equation*}
\psi(x)\Psi(x) = \Psi(x) - 2kx\Psi(x) - kx^2\Psi(x) - x^3\Psi(x).
\end{equation*}
After expanding each shifted sum and collecting like terms, all coefficients for powers $x^j$ with $3 \leq j \leq n-1$, by the recurrence \eqref{eq:recurrence}, become:
\[
P_{k,j} - 2k P_{k,j-1} - k P_{k,j-2} - P_{k,j-3} = 0.
\]
Thus, only the boundary terms ($j=0,1,2$) and tail terms ($j=n,n+1,n+2$) survive, i.e.
$$\psi(x)\Psi(x)=\sum_{j=0}^{2} c_jx^j + \sum_{j=n}^{n+2} t_jx^j.$$
Using $P_{k,0}=0$, $P_{k,1}=1$, $P_{k,2}=2k$ we get $c_{0}=P_{k,0} = 0$, $c_{1}=P_{k,1}-2kP_{k,0}=1$, $c_{2}=P_{k,2}-2kP_{k,1}-kP_{k,0}=0$.
The tail coefficients coming from the shifted sums that extend beyond $n-1$ are $t_{n}=-2k P_{k,n-1} - k P_{k,n-2} - P_{k,n-3}=-P_{k,n}$, $t_{n+1}=-k P_{k,n-1} - P_{k,n-2}$, $t_{n+2}=-P_{k,n-1}$.
\end{proof}

Using the previous result and the relation $\bold{S}_n^n = r\bold{I}_n$, where $\bold{I}_n$ denotes the identity matrix of order $n\geq3$, it follows that:
\begin{align*}
\mathrm{Circ}_r(1,-2k,-k,-1,0,\ldots,0)\bold{P}_n
&=
\mathrm{Circ}_r\bigl(-rP_{k,n},\,1-rkP_{k,n-1}-rP_{k,n-2},\\
&\hspace{2.2cm}-rP_{k,n-1},\,0,\ldots,0\bigr).
\end{align*}

The following theorem gives an explicit expression for the eigenvalues in the case of the matrices $\bold{P}_n$.
\begin{theorem} \label{thm:eigenvalues}
Let $n\geq3$, $r\in\mathbb{C}\setminus\{0\}$, $\alpha$, $\beta$, $\gamma$ be the roots of the characteristic polynomial $\phi(x)$ defined in \eqref{kar_jed}. Set $\rho_m = r^{1/n} \omega_n^m$, for $m = 0,1,\dots,n-1$ and $\omega_n = e^{2\pi i / n}$.
The eigenvalues of the matrix $\bold{P}_n = \mathrm{Circ}_r(P_{k,0}, P_{k,1}, \dots, P_{k,n-1})$ are given as follows:
\begin{enumerate}
\item If $\rho_m \notin \{\alpha^{-1}, \beta^{-1}, \gamma^{-1}\}$, then:
\[
\lambda_m =
\frac{
\rho_m - rP_{k,n}
- r\rho_m (k P_{k,n-1} + P_{k,n-2})
- r\rho_m^2 P_{k,n-1}
}{
1-2k\rho_m-k\rho_m^2-\rho_m^3
}.
\]

\item If $\rho_m = \alpha^{-1}$, then:
\[
\lambda_m =
\frac{n\alpha}{(\alpha-\beta)(\alpha-\gamma)}
-\frac{\beta(\alpha^n-\beta^n)}{\alpha^{n-1}(\beta-\alpha)^2(\beta-\gamma)}
-\frac{\gamma(\alpha^n-\gamma^n)}{\alpha^{n-1}(\gamma-\alpha)^2(\gamma-\beta)}.
\]

\item If $\rho_m = \beta^{-1}$, then:
\[
\lambda_m =
\frac{n\beta}{(\beta-\alpha)(\beta-\gamma)}
-\frac{\alpha(\beta^n-\alpha^n)}{\beta^{n-1}(\alpha-\beta)^2(\alpha-\gamma)}
-\frac{\gamma(\beta^n-\gamma^n)}{\beta^{n-1}(\gamma-\beta)^2(\gamma-\alpha)}.
\]

\item If $\rho_m = \gamma^{-1}$, then:
\[
\lambda_m =
\frac{n\gamma}{(\gamma-\alpha)(\gamma-\beta)}
-\frac{\alpha(\gamma^n-\alpha^n)}{\gamma^{n-1}(\alpha-\gamma)^2(\alpha-\beta)}
-\frac{\beta(\gamma^n-\beta^n)}{\gamma^{n-1}(\beta-\gamma)^2(\beta-\alpha)}.
\]
\end{enumerate}
\end{theorem}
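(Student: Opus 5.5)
Our starting point is Proposition~\ref{prop:eigen-general}: the eigenvalues are $\lambda_m=\Psi(\rho_m)$, where $\Psi(x)=\sum_{j=0}^{n-1}P_{k,j}x^j$. The first case then follows directly from Lemma~\ref{lem:sum-identity}. Evaluate that identity at $x=\rho_m$ and use $\rho_m^n=r$, so that $\rho_m^{n+1}=r\rho_m$ and $\rho_m^{n+2}=r\rho_m^2$. This gives
\[
\psi(\rho_m)\lambda_m=\rho_m-rP_{k,n}-r\rho_m(kP_{k,n-1}+P_{k,n-2})-r\rho_m^2P_{k,n-1}.
\]
Since $\psi(x)=x^3\phi(1/x)$ and $\phi(0)=-1\neq0$, the zeros of $\psi$ are exactly $\alpha^{-1},\beta^{-1},\gamma^{-1}$. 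Hence $\psi(\rho_m)\neq0$ under the hypothesis of item~1, and dividing by $\psi(\rho_m)$ yields the stated formula.

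For the exceptional cases we cannot divide by $\psi(\rho_m)$, so we compute $\Psi(\rho_m)$ directly from the Binet formula of Proposition~\ref{n_th}. Write $P_{k,j}=A\alpha^{j+1}+B\beta^{j+1}+C\gamma^{j+1}$ with $A=1/((\alpha-\beta)(\alpha-\gamma))$ and $B,C$ defined analogously. Then
\[
\Psi(x)=A\alpha\sum_{j=0}^{n-1}(\alpha x)^j+B\beta\sum_{j=0}^{n-1}(\beta x)^j+C\gamma\sum_{j=0}^{n-1}(\gamma x)^j .
\]
Take $x=\rho_m=\alpha^{-1}$. The first geometric sum has ratio $1$, so it contributes $nA\alpha=\frac{n\alpha}{(\alpha-\beta)(\alpha-\gamma)}$. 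Because the roots are distinct, $\beta/\alpha\neq1$, and the second sum equals
\[
B\beta\,\frac{1-(\beta/\alpha)^n}{1-\beta/\alpha}=B\beta\,\frac{\alpha^n-\beta^n}{\alpha^{n-1}(\alpha-\beta)}.
\]
Substituting $B=1/((\beta-\alpha)(\beta-\gamma))$ and $\alpha-\beta=-(\beta-\alpha)$ gives $-\frac{\beta(\alpha^n-\beta^n)}{\alpha^{n-1}(\beta-\alpha)^2(\beta-\gamma)}$. The $\gamma$-term is handled the same way, which reproduces item~2.

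Items~3 and~4 follow from the same computation with the roles of $(\alpha,\beta,\gamma)$ permuted, since the Binet formula is symmetric in the roots.

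We expect the main obstacle to be bookkeeping rather than any conceptual step. We must check the sign conventions when converting $(\alpha-\beta)$ into $(\beta-\alpha)^2$. We must also note that at most one of $\alpha^{-1},\beta^{-1},\gamma^{-1}$ can coincide with a given $\rho_m$, because the roots are distinct; this ensures that the remaining geometric sums are nondegenerate. A minor further check is that the case split in the theorem is exhaustive.
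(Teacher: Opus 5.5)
Your proposal is correct and follows the paper's proof essentially step for step. For the generic case you evaluate Lemma~\ref{lem:sum-identity} at $\rho_m$ using $\rho_m^n=r$ and divide by $\psi(\rho_m)$; for the exceptional cases you expand via the Binet formula and sum the geometric series (ratio $1$ for the matching root, ratio $\neq 1$ for the other two), with your sign bookkeeping supplying the ``after simplification'' step the paper leaves implicit.
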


\begin{proof}
From Proposition \ref{prop:eigen-general}, we have $\lambda_m = \Psi(\rho_m) = \sum_{i=0}^{n-1} P_{k,i} \rho_m^i.$
Applying Lemma \ref{lem:sum-identity} and using the identities $\rho_m^n = r$, $\rho_m^{n+1} = r \rho_m$, and $\rho_m^{n+2} = r \rho_m^2$, we obtain:
\[
\psi(\rho_m)\lambda_m = \rho_m - rP_{k,n} - r \rho_m (k P_{k,n-1} + P_{k,n-2}) - r \rho_m^2 P_{k,n-1}.
\]
Dividing by $\psi(\rho_m)$ yields the desired expression for $\lambda_m$ in the case $\psi(\rho_m)\neq 0$, i.e., when $\rho_m$ is not a root of the reciprocal characteristic polynomial.

\medskip

\noindent
We now consider the case $\rho_m=\alpha^{-1}$. By Proposition \ref{n_th}, we have:
\begin{align*}
\lambda_m
&= \sum_{i=0}^{n-1} P_{k,i} \rho_m^i
= \sum_{i=0}^{n-1} \frac{A\alpha^i + B\beta^i + C\gamma^i}{\alpha^i}= A \sum_{i=0}^{n-1} 1
+ B \sum_{i=0}^{n-1} \frac{\beta^i}{\alpha^i}
+ C \sum_{i=0}^{n-1} \frac{\gamma^i}{\alpha^i},
\end{align*}
where $A=\frac{\alpha}{(\alpha-\beta)(\alpha-\gamma)},$ $B=\frac{\beta}{(\beta-\alpha)(\beta-\gamma)},$ $C=\frac{\gamma}{(\gamma-\alpha)(\gamma-\beta)}.$
Using the formula for a geometric series, we obtain:
\[
\lambda_m
= An
+ B \frac{1-(\beta/\alpha)^n}{1-\beta/\alpha}
+ C \frac{1-(\gamma/\alpha)^n}{1-\gamma/\alpha}.
\]
After simplification, this yields the desired closed form.
The cases $\rho_m=\beta^{-1}$ and $\rho_m=\gamma^{-1}$ are derived analogously.
\end{proof}

To derive the closed form of the determinant of the matrix $\bold{A}_n$, we use the fact that the $r$-circulant matrix $\bold{A}_n$ is diagonalizable over $\mathbb{C}$ and that the determinant of a diagonalizable matrix equals the product of its eigenvalues.
In the generic case, when $\rho_m \notin \{\alpha^{-1}, \beta^{-1}, \gamma^{-1}\}$ for all $m = 0,1,\dots,n-1$, the eigenvalue expression admits a particularly convenient form, which leads to a more compact representation of the determinant.

\begin{theorem} \label{thm:determinant}
Let $n\geq3$, $r\in\mathbb{C}\setminus\{0\}$, and $\alpha$, $\beta$, $\gamma$ be the roots of the characteristic polynomial $\phi(x)$ defined in \eqref{kar_jed}, and let $\rho_m = r^{1/n} \omega_n^m$, where $m = 0,1,\dots,n-1$ and $\omega_n = e^{2\pi i / n}$. 
Assume that $\rho_m \notin \{\alpha^{-1}, \beta^{-1}, \gamma^{-1}\}$ for all $m = 0,1,\dots,n-1$. Then the determinant of $\bold{P}_n =  \mathrm{Circ}_r(P_{k,0}, P_{k,1}, \dots, P_{k,n-1})$ is given by:
\begin{align*} 
\det (\bold{P}_n) =(-1)^n\, \dfrac{r^nP_{k,n-1}^n(r_1^n-r)(r_2^n-r)}{(\alpha^{-n}-r)(\beta^{-n}-r)(\gamma^{-n}-r)},
\end{align*}
where $r_1$ and $r_2$ satisfy:
$$r_1+r_2=-\dfrac{rkP_{k,n-1}+rP_{k,n-2}-1}{rP_{k,n-1}},\qquad
r_1r_2=\dfrac{P_{k,n}}{P_{k,n-1}}.$$
\end{theorem}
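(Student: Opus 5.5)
The plan is to write $\det(\bold{P}_n)=\prod_{m=0}^{n-1}\lambda_m$, since $\bold{P}_n$ is diagonalizable (Proposition \ref{prop:eigen-general}). Under the standing assumption, every eigenvalue has the form of case 1 of Theorem \ref{thm:eigenvalues}. We then evaluate the numerator product and the denominator product separately. Both are products over $m$ of a fixed polynomial evaluated at $\rho_m$. The basic tool is the identity
\[
\prod_{m=0}^{n-1}(z-\rho_m)=z^n-r,
\]
valid because the $\rho_m=r^{1/n}\omega_n^m$ are exactly the $n$ roots of $z^n=r$.

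\textbf{Denominator.} We have $\psi(x)=1-2kx-kx^2-x^3=x^3\phi(1/x)$, and $\phi(x)=(x-\alpha)(x-\beta)(x-\gamma)$. Hence
\[
\psi(x)=(1-\alpha x)(1-\beta x)(1-\gamma x)=-\alpha\beta\gamma\,(x-\alpha^{-1})(x-\beta^{-1})(x-\gamma^{-1}),
\]
and Vieta gives $\alpha\beta\gamma=1$. Therefore
\[
\prod_m\psi(\rho_m)=(-1)^n\prod_m(\rho_m-\alpha^{-1})(\rho_m-\beta^{-1})(\rho_m-\gamma^{-1}).
\]
Since $\prod_m(\rho_m-z)=(-1)^n(z^n-r)$, this becomes
\[
\prod_m\psi(\rho_m)=(-1)^{n}(-1)^{3n}(\alpha^{-n}-r)(\beta^{-n}-r)(\gamma^{-n}-r)=(\alpha^{-n}-r)(\beta^{-n}-r)(\gamma^{-n}-r).
\]
The assumption $\rho_m\notin\{\alpha^{-1},\beta^{-1},\gamma^{-1}\}$ guarantees that none of these factors vanishes.

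\textbf{Numerator.} The numerator is the quadratic
\[
N(x)=-rP_{k,n-1}x^2-\bigl(rkP_{k,n-1}+rP_{k,n-2}-1\bigr)x-rP_{k,n}=-rP_{k,n-1}(x-r_1)(x-r_2).
\]
Here $r_1,r_2$ are its roots, so by Vieta $r_1+r_2=-(rkP_{k,n-1}+rP_{k,n-2}-1)/(rP_{k,n-1})$ and $r_1r_2=P_{k,n}/P_{k,n-1}$, matching the statement. This factorization needs $P_{k,n-1}\neq0$, which holds because $n\ge3$ gives $P_{k,n-1}\ge 2k>0$. Then
\[
\prod_m N(\rho_m)=(-rP_{k,n-1})^n\prod_m(\rho_m-r_1)(\rho_m-r_2)=(-1)^n r^nP_{k,n-1}^n(r_1^n-r)(r_2^n-r),
\]
where the two factors $(-1)^n$ produced by reversing $\rho_m-r_i$ cancel each other.

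\textbf{Combining.} Dividing the numerator product by the denominator product gives the stated formula with the overall sign $(-1)^n$. The main thing to watch is the sign bookkeeping, in particular using $\alpha\beta\gamma=1$ to remove the leading coefficient of $\psi$. One further point: if $N$ has a double root the formula still holds with $r_1=r_2$.
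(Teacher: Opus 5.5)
Your proposal is correct and follows essentially the same route as the paper: write $\det(\mathbf{P}_n)=\prod_m\lambda_m$ using case 1 of Theorem~\ref{thm:eigenvalues}, factor the numerator as $-rP_{k,n-1}(\rho_m-r_1)(\rho_m-r_2)$ and the denominator as $-(\rho_m-\alpha^{-1})(\rho_m-\beta^{-1})(\rho_m-\gamma^{-1})$, then apply $\prod_m(\rho_m-a)=(-1)^n(a^n-r)$. You also fill in details the paper leaves implicit: you use $\alpha\beta\gamma=1$ to justify the leading coefficient of $\psi$, and you check that $P_{k,n-1}>0$.
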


\begin{proof}
From the spectral decomposition of the $r$-circulant matrix $\bold{P}_n$, we have  $\det(\bold{P}_n)=\prod_{m=0}^{n-1}\lambda_m$. Hence,
\[
\det(\bold{P}_n)
=
\frac{\prod_{m=0}^{n-1} \big(\rho_m - rP_{k,n}
- r \rho_m (k P_{k,n-1} + P_{k,n-2})
- r \rho_m^2 P_{k,n-1}\big)}
{\prod_{m=0}^{n-1} \big(1 - 2k \rho_m - k \rho_m^2 - \rho_m^3\big)}.
\]
The numerator can be viewed as a quadratic polynomial in $\rho_m$ that takes the form:
\[
-rP_{k,n-1}(\rho_m - r_1)(\rho_m - r_2),
\]
where $r_1,r_2$ are the roots determined via Vieta’s relations:
$$r_1+r_2=-\dfrac{rkP_{k,n-1}+rP_{k,n-2}-1}{rP_{k,n-1}},\qquad
r_1r_2=\dfrac{P_{k,n}}{P_{k,n-1}}.$$
Using the identity:
\begin{equation}\label{new_id}
\prod_{m=0}^{n-1} (\rho_m - a) = (-1)^n (a^n - r),
\end{equation}
we obtain that numerator equlas $(-1)^nr^nP_{k,n-1}^n (r_1^n - r)(r_2^n - r).$

\medskip

\noindent
The denominator polynomial factorizes as:
\[
1 - 2k\rho_m - k\rho_m^2 - \rho_m^3
= -(\rho_m - r_3)(\rho_m - r_4)(\rho_m - r_5),
\]
where $r_3,r_4,r_5$ are roots of  the polynomial $\psi(x)=1-2kx-kx^2-x^3$, i.e. $r_3=\alpha^{-1}$, $r_4=\beta^{-1}$, and $r_5=\gamma^{-1}.$ Using $(\ref{new_id})$ we obtain that the values for the denominator equals $(\alpha^{-n}-r)(\beta^{-n}-r)(\gamma^{-n}-r).$
This completes the proof.
\end{proof}

A characterization of the invertibility of $r$-circulant matrices is given in \cite{Puc}. In particular, the following proposition provides a necessary and sufficient condition in terms of the associated polynomial.

\begin{proposition} Let $n\geq2$. The $r$-circulant matrix $\mathrm{Circ}_r(a_0,a_1,a_2,\dots,a_{n-1})$ for $r\in\mathbb{C}\setminus\{0\}$ is invertible if and only if:
$$\mathrm{gcd}(\Psi_{a}(x), x^n - r) = 1,$$
where $\Psi_a(x)$ is the corresponding generating polynomial.
\end{proposition}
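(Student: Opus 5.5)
The plan is to combine the spectral description of Proposition \ref{prop:eigen-general} with elementary facts about polynomial divisibility. Put $\mathbf{C}=\mathrm{Circ}_r(a_0,\dots,a_{n-1})$. Since $r\neq 0$, the polynomial $x^n-r$ has the $n$ distinct roots $\rho_m=r^{1/n}\omega_n^m$, $m=0,\dots,n-1$. Distinctness is immediate, because $\rho_m/\rho_l=\omega_n^{m-l}\neq 1$ when $m\neq l$; alternatively, $x^n-r$ and its derivative $nx^{n-1}$ have no common root when $r\neq0$. Hence
\[
x^n-r=\prod_{m=0}^{n-1}(x-\rho_m).
\]

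The first step uses the fact that the eigenvectors $\mathbf{v}^{(m)}$ from Proposition \ref{prop:eigen-general} form a Vandermonde matrix with distinct nodes $\rho_m$. That matrix is therefore invertible, so $\mathbf{C}$ is diagonalizable with eigenvalues $\lambda_m=\Psi_a(\rho_m)$, and
\[
\det\mathbf{C}=\prod_{m=0}^{n-1}\Psi_a(\rho_m).
\]
It follows that $\mathbf{C}$ is invertible if and only if $\Psi_a(\rho_m)\neq 0$ for every $m$. Even without diagonalizability, the eigenvalues are exactly the values $\Psi_a(\rho_m)$, and that is all this step requires.

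The second step translates the condition into divisibility. Over $\mathbb{C}$, a monic $d=\gcd(\Psi_a,x^n-r)$ is nonconstant if and only if it has a root $z$. Such a $z$ is a common root of $\Psi_a$ and $x^n-r$, so $z=\rho_m$ for some $m$ and $\Psi_a(\rho_m)=0$. Conversely, if $\Psi_a(\rho_m)=0$, then $x-\rho_m$ divides both polynomials. Therefore $\gcd=1$ if and only if $\Psi_a(\rho_m)\neq0$ for all $m$, which by the first step is equivalent to invertibility.

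A few edge cases need care. If $\Psi_a\equiv 0$, the gcd is $x^n-r$, which is not $1$, and the matrix is the zero matrix, so the statement is consistent. The gcd may be taken over $\mathbb{C}[x]$. If the entries and $r$ lie in a subfield, the gcd does not change, because the Euclidean algorithm stays within that field. The main obstacle is only bookkeeping: justifying that $\{\Psi_a(\rho_m)\}$ is the full spectrum counted with multiplicity. This follows from the Vandermonde invertibility argument above, or from $\mathbf{C}=\Psi_a(\mathbf{S}_n)$ combined with the fact that $\mathbf{S}_n$ has characteristic polynomial $x^n-r$.
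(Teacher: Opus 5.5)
Your proof is correct. The paper gives no proof of this proposition; it only attributes it to \cite{Puc}. So there is no internal argument to match, but your route is the one the paper's own tools suggest: the eigenpairs $(\Psi_a(\rho_m),\mathbf{v}^{(m)})$ of Proposition~\ref{prop:eigen-general}, the formula $\det\mathbf{C}=\prod_m\Psi_a(\rho_m)$ used in Theorem~\ref{thm:determinant}, and the fact that the $\rho_m$ are exactly the simple roots of $x^n-r$.

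The paper only quotes Proposition~\ref{prop:eigen-general}, so it is worth adding the one-line check $(\mathbf{C}\mathbf{v})_i=\sum_{l=0}^{n-1}a_l\rho^{l+i}=\rho^i\Psi_a(\rho)$. Here $\rho^n=r$ absorbs the factor $r$ on the entries below the diagonal.

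Note also that only the ``if'' direction needs the full spectrum. For ``only if'', $\mathbf{v}^{(m)}\neq 0$ lies in the kernel whenever $\Psi_a(\rho_m)=0$. For ``if'', the $n$ eigenpairs alone would not exclude further eigenvalues. What closes this is the Vandermonde independence, or the spectral mapping through $\mathbf{C}=\Psi_a(\mathbf{S}_n)$, both of which you supply. Your interim sentence about ``even without diagonalizability'' really rests on one of those two facts.

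For comparison, the other standard proof is purely algebraic. Since $\mathbf{S}_n$ is the companion matrix of $x^n-r$, the map $p\mapsto p(\mathbf{S}_n)$ is an isomorphism from $F[x]/(x^n-r)$ onto the algebra of $r$-circulant matrices. A B\'ezout identity $u\Psi_a+v(x^n-r)=1$ then gives $\mathbf{C}^{-1}=u(\mathbf{S}_n)$. Conversely, by Cayley--Hamilton $\mathbf{C}^{-1}$ is a polynomial in $\mathbf{C}=\Psi_a(\mathbf{S}_n)$, hence equals $q(\mathbf{S}_n)$ for some $q$. Injectivity then forces $q\Psi_a\equiv 1$ modulo $x^n-r$, so the gcd is $1$.

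That argument works over any field and even for $r=0$, and it exhibits the inverse as an $r$-circulant. Yours needs $r\neq0$ and the algebraic closedness of $\mathbb{C}$. In exchange it produces the determinant formula $\prod_m\Psi_a(\rho_m)$, which is exactly what the paper relies on in its later invertibility theorems.
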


In the sequel, we use this proposition to study invertibility. In particular, we apply it to analyze the invertibility of both circulant and skew-circulant matrices associated with the generalized $k$-Pell–Tribonacci sequences.

\begin{theorem} Let $n\geq 2$ be an integer and $\alpha$ be the real root of $\phi(x)$. For all $r\in\mathbb{R}^+\setminus\{\alpha^{-1},P_{k,n-1}^{-n/2}P_{k,n}^{n/2}\}$, the $r$-circulant matrix: 
$$\bold{P}_n=\mathrm{Circ}_r(P_{k,0},P_{k,1},\ldots,P_{k,n-1})$$ is invertibile.
\end{theorem}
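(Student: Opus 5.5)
The plan is to show that every eigenvalue $\lambda_m=\Psi(\rho_m)$ of $\bold{P}_n$ is nonzero. This is equivalent to the gcd criterion of the preceding proposition, since the common roots of $\Psi(x)$ and $x^n-r$ are exactly the points $\rho_m$ with $\Psi(\rho_m)=0$. By Proposition \ref{prop:eigen-general} the eigenvalues are $\Psi(\rho_m)$ with $\rho_m=r^{1/n}\omega_n^m$. Because $r>0$, every $\rho_m$ has modulus $|\rho_m|=r^{1/n}$, and $\rho_0=r^{1/n}$ is real and positive. I would split according to whether $\psi(\rho_m)$ vanishes, mirroring Theorem \ref{thm:eigenvalues}.

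\emph{Positive real nodes.} If $\rho_m>0$ (in particular $m=0$), then $\Psi(\rho_m)=\sum_j P_{k,j}\rho_m^j\ge P_{k,1}\rho_m>0$. This holds because all $P_{k,j}\ge 0$ and $P_{k,1}=1$. It covers $\rho_m=\alpha^{-1}$ as well, so the real root of $\psi$ causes no trouble even though it was singled out in the hypothesis.

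\emph{Generic nodes with $\psi(\rho_m)\neq0$.} Here Lemma \ref{lem:sum-identity} together with $\rho_m^n=r$ gives $\psi(\rho_m)\lambda_m=q(\rho_m)$, where
\[
q(x)=-rP_{k,n-1}x^2+\bigl(1-r(kP_{k,n-1}+P_{k,n-2})\bigr)x-rP_{k,n}.
\]
So $\lambda_m=0$ forces $\rho_m\in\{r_1,r_2\}$, the roots of $q$ as in Theorem \ref{thm:determinant}. Since $r$ is real, $q$ has real coefficients and $r_1r_2=P_{k,n}/P_{k,n-1}>0$. I would then argue as follows.
\begin{itemize}
\item If $r_1,r_2$ are non-real, they are conjugate, so $|r_1|^2=P_{k,n}/P_{k,n-1}$. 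The equality $|\rho_m|=r^{1/n}$ then forces $r=(P_{k,n}/P_{k,n-1})^{n/2}=P_{k,n-1}^{-n/2}P_{k,n}^{n/2}$, which is exactly the excluded value.
\item If $r_1,r_2$ are real, then $\rho_m$ must be real, so $\rho_m=\pm r^{1/n}$. The case $+r^{1/n}$ is already handled by positivity.
\end{itemize}

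\emph{Main obstacle: the negative real node.} The remaining generic case is $\rho_m=-r^{1/n}$ with $n$ even. Writing $s=r^{1/n}=\rho_m^{-1}\cdot(-r^{1/n})\cdot(-1)$, i.e.\ $\rho_m=-s$, the condition $q(-s)=0$ reads
\[
r\bigl(P_{k,n-1}s^2-(kP_{k,n-1}+P_{k,n-2})s+P_{k,n}\bigr)=-s .
\]
I would first try to rule this out by a direct sign estimate on $\Psi(-s)=\sum_j P_{k,j}(-s)^j$, pairing consecutive terms and using that $P_{k,j}$ is increasing with ratio above $2k$. For $s$ away from $1/(2k)$ this pairing should give a definite sign. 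If that fails in a window near $1/(2k)$, I would fall back on the quadratic relation above combined with $s^n=r$ to show that no admissible $s$ exists. This is the step I expect to be genuinely delicate. It may even require an additional exclusion of finitely many values of $r$, so I would check small $n$ numerically before committing to a general argument.

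\emph{Nodes where $\psi(\rho_m)=0$.} Finally, suppose $\rho_m\in\{\beta^{-1},\gamma^{-1}\}$. This requires $r=|\beta|^{-n}$, and also that the argument of $\beta^{-1}$ be a multiple of $2\pi/n$. In that case I would use the explicit formulas in cases 3--4 of Theorem \ref{thm:eigenvalues}. Because $|\alpha|>|\beta|=|\gamma|$, the terms involving $\alpha^n$ dominate there, which should show $\lambda_m\neq0$.
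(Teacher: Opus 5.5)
There is a genuine gap. Your case split is sound as far as it goes, but the node $\rho_{n/2}=-r^{1/n}$ for even $n$ is left as a plan (``should give a definite sign'', ``may even require an additional exclusion''), so the theorem is not yet proved. This is the one case where the conjugate-root argument gives nothing, and you are right to single it out. The paper's own proof passes over it: its step ``$|\omega_n^m|=1$ gives $P_{k,n}=r^{2/n}P_{k,n-1}$'' silently assumes $\rho_m$ is non-real. Its $m=0$ claim that all terms are positive also fails when $r(kP_{k,n-1}+P_{k,n-2})<1$; your bound $\Psi(\rho_0)\ge\rho_0>0$ is the correct argument there.

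The missing step is short and needs no extra exclusion. Write $\rho_{n/2}=-s$ with $s>0$. For $s\ge 1/(2k)$, pair consecutive terms:
\[
\Psi(-s)=\sum_{i=0}^{n/2-1}s^{2i}\bigl(P_{k,2i}-sP_{k,2i+1}\bigr)<0.
\]
This holds because $P_{k,j+1}\ge 2kP_{k,j}$ for every $j\ge 0$ and the $i=0$ summand equals $-s$. (For $n=2$ this sum is just $-s$, so nothing more is needed.) For $0<s\le 1$ and $n\ge 4$, your relation $s^n g(s)=-s$ is impossible, where $g(s)=P_{k,n-1}s^2-(kP_{k,n-1}+P_{k,n-2})s+P_{k,n}$. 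Indeed $P_{k,n}-kP_{k,n-1}-P_{k,n-2}=kP_{k,n-1}+(k-1)P_{k,n-2}+P_{k,n-3}>0$, so $g(s)>0$ on $(0,1]$. Also $\psi(-s)=1+ks(2-s)+s^3>0$ there, so Lemma \ref{lem:sum-identity} yields $\Psi(-s)=-\bigl(s+s^n g(s)\bigr)/\psi(-s)<0$. The two ranges overlap because $1/(2k)\le 1/2$. Hence $\Psi(-s)\neq 0$ for every $s>0$.

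Your final case (nodes with $\psi(\rho_m)=0$) is only a heuristic, and its premise $|\beta|=|\gamma|$ is false for $k\ge 9$. There $-4k^4+28k^3+36k^2+27<0$, so $\phi$ has three real roots, and $\beta,\gamma$ are negative with $|\beta|+|\gamma|=\alpha-2k<1$. The nodes $\beta^{-1},\gamma^{-1}$ are then of the form $-s$ with $s>1$. The pairing bound above never divides by $\psi$, so it already covers them.

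For $k\le 8$ the case is vacuous. If $\rho_m=\beta^{-1}$ and $\rho_m^n=r\in\mathbb{R}$, then $\beta^n=\overline{\beta^n}=\gamma^n$. Now $\phi$ is irreducible over $\mathbb{Q}$, since $\phi(\pm1)\neq 0$. Complex conjugation acts on its roots as a transposition, so the Galois group is $S_3$. Applying the automorphism that exchanges $\alpha$ and $\beta$ and fixes $\gamma$ gives $\alpha^n=\gamma^n$. This contradicts $|\gamma|^2=\beta\gamma=\alpha^{-1}<\alpha^2$.

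With these two repairs your decomposition becomes a complete proof. It also shows that the hypothesis excluding $r=\alpha^{-1}$ is unnecessary.
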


\begin{proof} 
Note that $\mathrm{det}(\mathrm{Circ}_r(0,1))=-r<0$, so let us suppose that $n\geq3$. Since $r\in\mathbb{R}^+$, we can take $\rho_m=r^{1/n}\omega_n^m$, for $m<n$. The corresponding eigenvalue $\lambda_m$, for $m=0,1,…,n-1$, is given by:
\[
\frac{
r^{1/n}\omega_n^{m} - rP_{k,n}
- rr^{1/n}\omega_n^{m} (k P_{k,n-1} + P_{k,n-2})
- rr^{2/n}\omega_n^{2m} P_{k,n-1}
}{
1-2kr^{1/n}\omega_n^{m}-kr^{2/n}\omega_n^{2m}-r^{3/n}\omega_n^{3m}}.
\]
Since $\mathrm{det}(\bold{P}_n)=\prod_{m=0}^{n-1}\lambda_m$, the matrix $\bold{P}_n$ is singular if and only if:
\begin{align*}\label{r_plus}
 rP_{k,n}+r^{1/n}\omega_n^{m}(rkP_{k,n-1}+rP_{k,n-2}-1) 
+rr^{2/n}P_{k,n-1}(\omega_n^{m})^2=0
\end{align*}
for some $m=0,1,\ldots n-1.$ Assume that there exists $m$ such that the previous equation holds.  Let us consider the following cases:
\begin{enumerate}
\item
If $m=0$, then $ rP_{k,n}+r^{1/n}(rkP_{k,n-1}+rP_{k,n-2}-1)+ rr^{2/n}P_{k,n-1}$  must be zero. This is a contradiction, since all terms are positive. 
\item
If $m\neq0$, observing that $|\omega_n^m| = 1$, it follows that $P_{k,n} = r^{2/n} P_{k,n-1}.$
This contradicts the hypothesis of the theorem.
\end{enumerate}

\vspace*{-6mm}
\end{proof}

\begin{corollary}
For all $n\geq 2$, the circulant matrix: 
$$\mathrm{Circ}_1(P_{k,0},P_{k,1},\ldots,P_{k,n-1})$$ 
is invertibile.
\end{corollary}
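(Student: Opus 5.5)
This follows from the preceding theorem with $r=1$, once we check that $r=1$ avoids both excluded values. For $n=2$ the matrix is $\mathrm{Circ}_1(0,1)$, whose determinant is $-1\neq 0$. So we assume $n\ge 3$ and verify two facts:
\begin{itemize}
\item[(i)] $1\neq\alpha^{-1}$;
\item[(ii)] $1\neq P_{k,n-1}^{-n/2}P_{k,n}^{n/2}$, equivalently $P_{k,n}\neq P_{k,n-1}$.
\end{itemize}

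For (i), the lemma locating the dominant root gives $2k<\alpha<2k+1$. Since $k\ge1$, this means $\alpha>2$, so $\alpha^{-1}<1/2\neq 1$. Alternatively, $\phi(1)=-3k\neq 0$.

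For (ii), we show the sequence is strictly increasing from index $1$ on. We have $P_{k,0}=0$, $P_{k,1}=1$, $P_{k,2}=2k\ge 2$. The paper already notes $P_{k,j}\ge 0$ for all $j$. Hence for $j\ge 3$,
\[
P_{k,j}=2kP_{k,j-1}+kP_{k,j-2}+P_{k,j-3}\ge 2P_{k,j-1}>P_{k,j-1},
\]
where the last step uses $P_{k,j-1}>0$ (true by induction, since $P_{k,1}>0$). Thus $P_{k,n}>P_{k,n-1}>0$ for every $n\ge 2$. So $P_{k,n}/P_{k,n-1}>1$, and its $(n/2)$-th power exceeds $1$.

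It is worth making (ii) explicit in the eigenvalue argument of the preceding proof, since that argument is where the condition is used. With $r=1$, the $m\ne0$ case there needs $P_{k,n}=r^{2/n}P_{k,n-1}=P_{k,n-1}$, which is impossible.

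The $m=0$ case there relies on positivity of the coefficient $rkP_{k,n-1}+rP_{k,n-2}-1$. At $r=1$ this is $kP_{k,n-1}+P_{k,n-2}-1\ge kP_{k,2}-1=2k^2-1>0$ for $n\ge3$. We also need the denominators $\psi(\rho_m)$ to be nonzero, so that the eigenvalue formula of Theorem~\ref{thm:eigenvalues} applies. At $r=1$ this means no $n$-th root of unity equals $\alpha^{-1}$, $\beta^{-1}$ or $\gamma^{-1}$. That holds because $\phi$ has no root on the unit circle: $|\beta\gamma|=1/\alpha<1$, and $\beta,\gamma$ are complex conjugates, so $|\beta|=|\gamma|<1$.

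The only real obstacle is this last point about unit-modulus roots, and the Vieta relation $\alpha\beta\gamma=1$ settles it. Everything else is direct substitution.
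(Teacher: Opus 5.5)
Your reduction matches the paper's: the corollary is the preceding theorem at $r=1$, and your checks that $1\neq\alpha^{-1}$ and $P_{k,n}>P_{k,n-1}$ are correct. But you also assert that at $r=1$ the preceding proof is complete once $\psi(\rho_m)\neq0$ is known (``everything else is direct substitution''), and that is not so.

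The $m\neq 0$ step concludes $P_{k,n}=r^{2/n}P_{k,n-1}$. It rests on $\rho_m$ and $\overline{\rho_m}$ being the two roots of the real quadratic $rP_{k,n-1}\rho^2+(rkP_{k,n-1}+rP_{k,n-2}-1)\rho+rP_{k,n}$, so that their product $|\rho_m|^2$ equals $P_{k,n}/P_{k,n-1}$. This needs $\rho_m\notin\mathbb{R}$. For even $n$ and $m=n/2$ you have $\rho_m=-1$, and then Vieta says nothing about the second root. Neither you nor the paper treats this eigenvalue, so citing the theorem as a black box inherits the hole. At $r=1$ it closes easily. The numerator at $\rho=-1$ is
\[
-1-P_{k,n}+kP_{k,n-1}+P_{k,n-2}-P_{k,n-1}=-1-(k+1)P_{k,n-1}-(k-1)P_{k,n-2}-P_{k,n-3}<0,
\]
while $\psi(-1)=k+2\neq 0$. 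Alternatively, directly, $\Psi(-1)=-P_{k,1}+\sum_{j=1}^{n/2-1}(P_{k,2j}-P_{k,2j+1})<0$ by your monotonicity.

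A second, smaller error: $\beta,\gamma$ are not always complex conjugates, although the paper itself suggests they are. For $k\ge 9$ the quantity $-4k^4+28k^3+36k^2+27$ is negative and $\phi$ has three real roots; for example $\phi(-1/4)=k/8-65/64>0$, while $\phi(0)=-1$ and $\phi(-1)=-k-2$. So your argument that no root of $\phi$ lies on the unit circle must also note that $\phi(1)=-3k$ and $\phi(-1)=-k-2$ are nonzero.

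Incidentally, you can bypass the eigenvalue formula and all its case distinctions. Since $P_{k,j}\ge 2P_{k,j-1}$ for $j\ge 2$, induction gives $P_{k,n-1}>\sum_{i=0}^{n-2}P_{k,i}$. Hence for every $n$-th root of unity $\omega$ you have $|\Psi(\omega)|\ge P_{k,n-1}-\sum_{i=0}^{n-2}P_{k,i}>0$, so every eigenvalue $\Psi(\omega_n^m)$ of the circulant is nonzero, for all $n\ge 2$.
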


\begin{remark}
\emph{A numerical experiment implemented in Python was conducted for $n = 2, 3, \ldots, 30$ and $k = 1, 2, \ldots, 10$, in the cases not covered by the previous theorem. While the results indicate invertibility in most cases, counterexamples were identified (e.g., $k=5$ for $n=28,29,30$), showing that the statement does not hold in full generality.}
\end{remark}

\begin{theorem} Let $n\geq 2$ be an integer. For all $r\in\mathbb{R}^-\setminus\{-P_{k,n-1}^{-n/2}P_{k,n}^{n/2}\}$, 
the $r$-circulant matrix: 
$$\bold{P}_n=\mathrm{Circ}_r(P_{k,0},P_{k,1},\ldots,P_{k,n-1})$$ 
is invertibile.
\end{theorem}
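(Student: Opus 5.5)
The plan mirrors the proof of the preceding theorem for $r\in\mathbb{R}^+$, adapted to a negative parameter. The case $n=2$ is immediate: $\det(\mathrm{Circ}_r(0,1))=-r>0$. So we assume $n\geq3$. Write $r=-s$ with $s>0$. We choose the $n$-th roots of $r$ as $\rho_m=s^{1/n}\omega_{2n}^{2m+1}$, $m=0,\ldots,n-1$, where $\omega_{2n}=e^{\pi i/n}$; these satisfy $\rho_m^n=-s=r$.

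By Theorem \ref{thm:eigenvalues}, whenever $\psi(\rho_m)\neq0$ we have
\[
\psi(\rho_m)\lambda_m=\rho_m-rP_{k,n}-r\rho_m(kP_{k,n-1}+P_{k,n-2})-r\rho_m^2P_{k,n-1}.
\]
Since $\det(\bold{P}_n)=\prod_m\lambda_m$, singularity requires some $m$ with
\[
rP_{k,n-1}\rho_m^2+(rkP_{k,n-1}+rP_{k,n-2}-1)\rho_m+rP_{k,n}=0 .
\]
We also need $\psi(\rho_m)\neq0$. Because $\alpha>0$, we have $\alpha^{-n}>0\neq r$, so $\rho_m\neq\alpha^{-1}$. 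The cases $\rho_m\in\{\beta^{-1},\gamma^{-1}\}$ need separate handling, which is why this step is the main obstacle (see below).

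The core step is to show that the quadratic above has no root among the $\rho_m$. Dividing by $\rho_m$ turns it into
\[
rP_{k,n-1}\rho_m+\frac{rP_{k,n}}{\rho_m}=1-rkP_{k,n-1}-rP_{k,n-2}.
\]
The right-hand side equals $1+s(kP_{k,n-1}+P_{k,n-2})$, which is real and positive. Put $\rho_m=s^{1/n}e^{i\theta}$ with $\theta=(2m+1)\pi/n$. The imaginary part of the left-hand side is
\[
-s\sin\theta\,\bigl(P_{k,n-1}s^{1/n}-P_{k,n}s^{-1/n}\bigr).
\]
It vanishes only if $\sin\theta=0$ or $P_{k,n}=s^{2/n}P_{k,n-1}$. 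The latter means $s=P_{k,n-1}^{-n/2}P_{k,n}^{n/2}$, which the hypothesis excludes. If instead $\sin\theta=0$, then $\theta=\pi$, i.e.\ $n$ is odd and $\rho_m=-s^{1/n}$ is real and negative. In that case the left-hand side is $s^{1/n}sP_{k,n-1}+s^{1-1/n}P_{k,n}>0$, and we must compare it with the positive right-hand side. At this point the sign argument from the $\mathbb{R}^+$ case no longer works directly, because both sides are positive.

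To settle the real root $\rho=-s^{1/n}$ when $n$ is odd, I would go back to the definition $\lambda_m=\Psi(\rho)=\sum_{j}P_{k,j}\rho^j$ and try to show $\Psi(\rho)\neq0$ for negative real $\rho$. The first attempt would use the Binet form from Proposition \ref{n_th} together with the dominance of $\alpha$. Alternatively, I would try to show that the quadratic $rP_{k,n-1}x^2+(rkP_{k,n-1}+rP_{k,n-2}-1)x+rP_{k,n}$ has no negative roots. Since $r<0$, the leading and constant coefficients are both negative and the middle coefficient is also negative. So for $x<0$ the quadratic equals $rP_{k,n-1}x^2+rP_{k,n}+(\text{negative})\cdot x$, a sum of two negative terms and one positive term. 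That alone does not decide the sign, so the argument will need a discriminant or size estimate using $P_{k,n}\approx\alpha P_{k,n-1}$. This real-root case, together with excluding $\rho_m\in\{\beta^{-1},\gamma^{-1}\}$ (where the relevant values have modulus determined by $|\beta|=|\gamma|=\alpha^{-1/2}$), is where I expect the main difficulty.
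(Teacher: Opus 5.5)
\textbf{There is a genuine gap, and it cannot be closed.} The failing step is the case you flagged yourself: odd $n$, $\theta=\pi$, $\rho=-s^{1/n}$. For odd $n\ge 3$ the statement is false, so neither of your remedies can succeed (showing $\Psi(\rho)\neq 0$ for negative real $\rho$, or a discriminant/size estimate on the quadratic).

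\emph{A concrete counterexample at $n=3$.} Here $\mathbf{P}_3=\mathrm{Circ}_r(0,1,2k)$, and direct expansion gives $\det\mathbf{P}_3=r+8k^3r^2=r(1+8k^3r)$. This vanishes at $r=-1/(8k^3)$. The only excluded value is $-(P_{k,3}/P_{k,2})^{3/2}=-(2k+\tfrac{1}{2})^{3/2}$, which is different.

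\emph{Every odd $n\ge 3$ fails.} For small $t>0$ one has $\Psi(-t)=-t+O(t^2)<0$. On the other hand $\Psi(-1)=\sum_{i=1}^{(n-1)/2}(P_{k,2i}-P_{k,2i-1})>0$, since the sequence is strictly increasing from index $1$. Hence $\Psi(-t_0)=0$ for some $t_0\in(0,1)$. Then $\rho=-t_0$ is an $n$-th root of $r=-t_0^n\in(-1,0)$, so $\Psi(\rho)=0$ is an eigenvalue and $\mathbf{P}_n$ is singular. This $r$ is not the excluded value, because $P_{k,n}\ge 2kP_{k,n-1}$ forces the excluded value to be $<-1$.

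\emph{The paper's proof breaks at the same point.} Its first case treats $\omega_n^{i/2+m}=1$, i.e.\ a positive real $\rho$, which is never an $n$-th root of a negative $r$. Its second case silently discards $\sin\theta=0$. So the negative real root $-|r|^{1/n}$ is never examined.

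\emph{What your argument does prove is the even case.} For even $n$, no angle $(2m+1)\pi/n$ equals $\pi$, so every $\rho_m$ is non-real. Your imaginary-part computation plus the excluded value of $r$ then gives invertibility, and $n=2$ is immediate.

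\emph{Your worry about $\rho_m\in\{\beta^{-1},\gamma^{-1}\}$ is unnecessary.} Evaluate the identity of Lemma~\ref{lem:sum-identity} at $\rho_m$ and use $\rho_m^n=r$. This shows that $\Psi(\rho_m)=0$ forces the quadratic to vanish at $\rho_m$, whatever the value of $\psi(\rho_m)$. That implication is the only one an invertibility proof needs. Separately, your remark $|\beta|=|\gamma|=\alpha^{-1/2}$ is false for $k\ge 9$: there the discriminant is negative and $\beta,\gamma$ are distinct negative reals.

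\emph{Conclusion.} The correct statement is that $\mathbf{P}_n$ is invertible for even $n$ and every $r<0$ other than the excluded value. For every odd $n\ge 3$ there are genuine counterexamples.
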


\begin{proof} 
Note that $\mathrm{det}(\mathrm{Circ}_r(0,1))=-r>0$, so let us suppose that $n\geq3$. Since $r\in\mathbb{R}^-$, we can take $\rho_m=|r|^{1/n}\omega_n^{i/2}\omega_n^m$, for some positive integer $i,m$ such that $i/2+m\leq n$. The eigenvalue $\lambda_m$, for $m=0,1,…,n-1$, is given by:
{\small\[
\frac{
|r|^{1/n}\omega_n^{i/2+m} +|r|P_{k,n}
+|r|^{1+1/n}\omega_n^{i/2+m} (k P_{k,n-1} + P_{k,n-2})
+|r|^{1+2/n}\omega_n^{i+2m} P_{k,n-1}
}{
1-2k|r|^{1/n}\omega_n^{i/2+m}-k|r|^{2/n}\omega_n^{i+2m}-|r|^{3/n}\omega_n^{3i/2+3m}}.
\]}

\noindent
Since $\mathrm{det}(\bold{P}_n)=\prod_{m=0}^{n-1}\lambda_m$, the matrix $\bold{P}_n$ is singular if and only if:
\begin{align*}\label{r_plus}
 |r|P_{k,n}+|r|^{1/n}\omega_n^{i/2+m}(1+|r|kP_{k,n-1}+|r|P_{k,n-2}) 
+|r|^{1+2/n}P_{k,n-1}(\omega_n^{i/2+m})^2=0,
\end{align*}
for some $m=0,1,\ldots n-1.$ Assume that there exists $m$ such that the previous equation holds.  Let us consider the following cases:
\begin{enumerate}
\item
If $i/2+m=n$, then $|r|P_{k,n}+|r|^{1/n}(1+|r|kP_{k,n-1}+|r|P_{k,n-2}) 
+|r|^{1+2/n}P_{k,n-1}$ must be zero. This is a contradiction, since all terms are positive. 
\item 
If $i/2+m<n$, observing that $|\omega^{i/2+m}_n| = 1$, it follows that $P_{k,n} = |r|^{2/n} P_{k,n-1}.$
This contradicts the hypothesis of the theorem.
\end{enumerate}

\vspace*{-6mm}
\end{proof}

\begin{corollary}
For all $n\geq 2$, the skew-circulant matrix: 
$$\mathrm{Circ}_{-1}(P_{k,0},P_{k,1},\ldots,P_{k,n-1})$$ 
is invertibile.
\end{corollary}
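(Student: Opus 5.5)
The plan is to obtain the corollary as the special case $r=-1$ of the preceding theorem on $r$-circulant matrices with $r\in\mathbb{R}^-$. That theorem guarantees invertibility of $\bold{P}_n$ for every $r\in\mathbb{R}^-\setminus\{-P_{k,n-1}^{-n/2}P_{k,n}^{n/2}\}$. So the only thing to verify is that $r=-1$ is not the excluded value, that is,
\[
\left(\frac{P_{k,n}}{P_{k,n-1}}\right)^{n/2}\neq 1 \qquad\text{for all } n\geq 2 .
\]
Since the base is a positive real number, it suffices to show $P_{k,n}/P_{k,n-1}>1$, i.e. $P_{k,n}>P_{k,n-1}$ for all $n\geq2$.

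First I will record positivity: $P_{k,n}>0$ for $n\geq1$. This follows by induction from $P_{k,1}=1$, $P_{k,2}=2k$ and the recurrence \eqref{eq:recurrence}, whose coefficients are positive; the paper already notes that all terms are nonnegative. Then I will prove strict monotonicity.

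\begin{itemize}
\item For $n=2$ we have $P_{k,2}=2k\geq 2>1=P_{k,1}$.
\item For $n\geq3$, the recurrence together with nonnegativity of $P_{k,n-2}$ and $P_{k,n-3}$ gives $P_{k,n}\geq 2kP_{k,n-1}\geq 2P_{k,n-1}$.
\item Since $P_{k,n-1}>0$, this yields $2P_{k,n-1}>P_{k,n-1}$, hence $P_{k,n}>P_{k,n-1}$.
\end{itemize}

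Consequently $P_{k,n}/P_{k,n-1}>1$, so its $n/2$-th power exceeds $1$. The excluded value $-P_{k,n-1}^{-n/2}P_{k,n}^{n/2}$ is therefore strictly less than $-1$, and $r=-1$ is admissible in the theorem.

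For robustness I will also treat $n=2$ directly, since the preceding theorem's argument reduces that case to a $2\times2$ determinant. Here $\mathrm{Circ}_{-1}(0,1)=\begin{bmatrix}0&1\\-1&0\end{bmatrix}$ has determinant $1\neq0$.

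There is no real obstacle beyond checking that the hypothesis of the preceding theorem is met. The one point needing care is that the exponent $n/2$ may be a half-integer. This causes no difficulty because the ratio $P_{k,n}/P_{k,n-1}$ is a positive real, so its real power is well defined and strictly greater than $1$.
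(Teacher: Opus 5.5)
Your reduction is exactly how the paper gets the corollary: it is stated without a separate proof, as the case $r=-1$ of the preceding theorem. Your check that $P_{k,n}>P_{k,n-1}$, so that $-1$ is not the excluded value, is correct, and so is your $n=2$ determinant. The gap is that the theorem you invoke is false as stated, so verifying its hypothesis proves nothing.

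Counterexample: for $k=1$, $n=3$ we have $\mathbf{P}_3=\mathrm{Circ}_r(0,1,2)$ and $\det(\mathbf{P}_3)=r+8r^2=r(1+8r)$. This vanishes at $r=-1/8$, although the only excluded value is $-(5/2)^{3/2}$. The error in that theorem's proof is as follows. Singularity means $\Psi(\rho)=0$ for some $\rho$ with $\rho^n=r$, which forces $N(\rho)=0$, where $N(\rho)=|r|P_{k,n-1}\rho^2+(1+|r|(kP_{k,n-1}+P_{k,n-2}))\rho+|r|P_{k,n}$. The step ``$|\omega|=1$ gives $P_{k,n}=|r|^{2/n}P_{k,n-1}$'' uses $|\rho|^2=\rho\bar\rho=P_{k,n}/P_{k,n-1}$, which is valid only when $\rho$ is non-real, since only then is $\bar\rho$ the second root. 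For odd $n$, the real root $\rho=-|r|^{1/n}$ is not covered. The paper's case 1 instead treats $\rho=|r|^{1/n}$, which is not an $n$-th root of a negative $r$. In the example above, $\rho=-1/2$ is exactly such a real root.

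So for $r=-1$ you must verify directly that $N$ has no zero on the unit circle. By Lemma~\ref{lem:sum-identity}, for every $\rho$ with $\rho^n=-1$ (so $|\rho|=1$),
\[
\psi(\rho)\Psi(\rho)=N(\rho):=P_{k,n-1}\rho^2+(1+kP_{k,n-1}+P_{k,n-2})\rho+P_{k,n}.
\]
Hence $N(\rho)\neq 0$ already implies $\Psi(\rho)\neq 0$, and you do not even need $\psi(\rho)\neq 0$. Check the possible zeros on the unit circle:
\begin{itemize}
\item A non-real zero of $N$ has modulus $\sqrt{P_{k,n}/P_{k,n-1}}>1$, by your monotonicity argument.
\item $N(1)>0$ trivially.
\item Using the recurrence, for $n\geq 3$,
\[
N(-1)=(k+1)P_{k,n-1}+(k-1)P_{k,n-2}+P_{k,n-3}-1\geq 1 .
\]
\end{itemize}
Therefore every eigenvalue $\lambda_m=\Psi(\rho_m)$ is nonzero. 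Together with your $n=2$ computation, this proves the corollary. The corollary itself is true; only the route through the general negative-$r$ theorem is unsound.
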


\begin{remark}
\emph{A numerical experiment implemented in Python was conducted for $n = 2, 3, \ldots, 30$ and $k = 1, 2, \ldots, 10$, in the cases not covered by the previous theorem. While the results indicate invertibility in most cases, counterexamples were identified (e.g., $k=5$ for $n=28,29,30$), showing that the statement does not hold in full generality.}
\end{remark}

\section{Acknowledgments}

Authors are deeply indebted to Professor Zoran Pucanovi\' c for having kindly suggested the problem.
This study is partially supported by the Ministry of  Science, Technological Development and Innovation, Republic of Serbia, through the project 451-03-33/2026-03/200104.

\bibliographystyle{plain}

\end{document}